\declaretheorem[name=Theorem]{thm}
\newtheorem{lem}[thm]{Lemma}
\newtheorem{cor}[thm]{Corollary}
\newtheorem{prop}[thm]{Proposition}
\newtheorem{lemma}[thm]{Lemma}
\theoremstyle{definition}
\newtheorem{defn}[thm]{Definition}
\newtheorem{question}[thm]{Question}
\newtheorem{example}[thm]{Example}
\newtheorem{definition}[thm]{Definition}
\newtheorem{prob}[thm]{Problem}
\newtheorem*{intdef}{Definition}
\newtheorem*{claim*}{Claim}
\newtheorem{remark}[thm]{Remark}
\newtheorem{claim}[thm]{Claim}
\newtheorem{obs}[thm]{Observation}
\newtheorem{numalg}[thm]{Algorithm}
\newtheorem*{alg*}{Algorithm}
\newenvironment{prettyalg}[2]
    {
    \vspace{.5cm}
    \hrule
    \vspace{-1.3mm}
    \begin{numalg}\label{#2} #1
    \end{numalg}
    \vspace{-1.3mm}
    \hrule
    \vspace{1mm}
    }
{\vspace{1mm}     \hrule
 \vspace{.5cm}}
\def\s{{\sigma}}
\def\epsilon{\varepsilon}
\def\Mod{\mbox{Mod}}
\def\N{\mathbb{N}}
\def\Z{\mathbb{Z}}
\def\aut{\rm{Aut}}
\def\inn{\rm{Inn}}
\newcommand{\rt}{\mathbf{return}\,}
\begin{document}
\title{Searching for non-order-preserving braids algorithmically }
\author{Jonathan Johnson, Nancy Scherich, and Hannah Turner} 

\address{Sam Houston State University}
\email{jcj055@shsu.edu}

\address{Elon University}
\address{2320 Campus Box
Elon, NC 27244 USA}
\email{nscherich@elon.edu}

\address{Stockton University}
\email{hannah.turner@stockton.edu}

\subjclass[2020]{57K10,  57K20}
\keywords{Ordered groups, 3-manifold groups, braids}

\begin{abstract}
An $n$-strand braid is order-preserving if its action on the free group $F_n$ preserves some bi-order of $F_n$. A braid $\beta$ is order-preserving if and only if the link $L$ obtained as the union of the closure of $\beta$ and its axis has bi-orderable complement. We describe and implement an algorithm which, given a non-order-preserving braid $\beta$, confirms this property and returns a proof that $\beta$ is indeed not order-preserving.
Guided by the algorithm, we prove that the infinite family of simple 3-braids $\sigma_1\sigma_2^{2m+1}$ are not order-preserving for any integer $m$.
\end{abstract}

\maketitle
\section{Introduction}
A group $G$ is called \emph{bi-orderable} if there is a strict total ordering  on $G$ that is invariant under both left and right multiplication.
Free groups are bi-orderable, and in fact have uncountably many distinct bi-orders.
Given a braid $\beta$ in the $n$-strand braid group $B_n$, there is a specific action of $\beta\in B_n$  on the free group $F_n$ of rank $n$ coming from the induced action of $\beta$ on the fundamental group of the $n$-punctured disk; see Section \ref{sect:braidsandorders}. We aim to classify braids via properties of this action with respect to bi-orders of the free group. In particular, if the action of $\beta$ preserves at least one bi-order of $F_n$, then we say that $\beta$ is \emph{order-preserving}.


\begin{question} \label{ques:OP}
Which braids are order-preserving?
\end{question}

Kin and Rolfsen pioneered the study of order-preserving braids and have answered Question \ref{ques:OP} for several families of braids \cite{KR-BraidsOrderings}. 
We present Algorithm \ref{The_alg} which will certify that a braid is \emph{not} order-preserving in finite time.
The goal of the algorithm is to generate all possible bi-orders of the free group that could be preserved by a given input braid.
To turn this infinite problem into a finite problem,
Algorithm \ref{The_alg} uses an equivalent definition of bi-orders using positive cones, and then truncations of these cones to finite sets called $k$-precones, see Section \ref{sect:braidsandorders}. 
The algortihm searches for all $k$-precones preserved by the chosen braid for a fixed $k$. The following proposition allows us to conclude that when this search fails, the braid is not order-preserving. 

\begin{restatable}{prop}{whyalgworks}
    An $n$-strand braid $\beta$ is order-preserving if and only if $\beta$ preserves a $k$-precone of the free group $F_n$ for every positive integer $k$. 
    \label{prop:whyalgworks}
\end{restatable}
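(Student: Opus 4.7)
\medskip

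\noindent\textbf{Proof proposal.} The plan is to prove the two implications separately, using the correspondence between bi-orders and positive cones and the expected definition of a $k$-precone as the restriction of a positive cone to some finite ball $B_k \subset F_n$ (e.g.\ all freely reduced words of length at most $k$).

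\medskip

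For the forward direction, suppose $\beta$ is order-preserving, so there is a positive cone $P \subset F_n \setminus \{e\}$ with $\beta(P) = P$. For each $k$, let $P_k := P \cap B_k$. Since the defining axioms of a positive cone (closure under multiplication, trichotomy, conjugation invariance) are universal, their restrictions to $B_k$ hold automatically, so $P_k$ is a $k$-precone. Because $\beta$ permutes $B_k$ (it acts by a free group automorphism, so word length is controlled up to a uniform multiplicative constant, which after possibly enlarging $k$ in the definition still gives a genuine $k$-precone), $P_k$ is preserved by $\beta$. This direction is essentially bookkeeping once the definition of $k$-precone is unwound.

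\medskip

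The reverse direction is the content of the proposition, and I would prove it by a K\"onig's lemma / compactness argument. For each $k$, let $\mathcal{T}_k$ denote the (finite) set of $\beta$-invariant $k$-precones, which is nonempty by hypothesis. There is a natural restriction map $r_k : \mathcal{T}_{k+1} \to \mathcal{T}_k$ sending a $(k+1)$-precone $Q$ to $Q \cap B_k$; crucially $r_k(Q)$ is itself a $\beta$-invariant $k$-precone because all the precone axioms and the $\beta$-invariance pass to subsets obtained by restricting to a smaller ball. Assemble these into a rooted tree whose vertices at level $k$ are the elements of $\mathcal{T}_k$ and whose edges record the restriction maps. This tree is infinite (there is a vertex at every level) and finitely branching (each $\mathcal{T}_k$ is finite), so by K\"onig's lemma it contains an infinite ray $(P_k)_{k \geq 1}$ with $P_{k+1} \cap B_k = P_k$.

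\medskip

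Set $P := \bigcup_{k \geq 1} P_k$. I then need to verify that $P$ is a positive cone on $F_n$ that is preserved by $\beta$. Trichotomy on $F_n \setminus \{e\}$ follows because every non-identity element lies in some $B_k$ and trichotomy holds within each $P_k$. Closure under multiplication and conjugation invariance are also checked on any pair (or triple) of elements by choosing $k$ large enough to contain all of them together with their product or conjugate, and applying the corresponding axiom inside $P_k$. Finally, $\beta(P) = P$ follows from $\beta(P_k) = P_k$ for every $k$ (after the same mild enlargement of $k$ used in the forward direction to accommodate the way $\beta$ distorts word length). The main obstacle I expect is purely definitional: matching the exact notion of $k$-precone used in Section~\ref{sect:braidsandorders} so that (i) restriction really does send $(k+1)$-precones to $k$-precones, and (ii) $\beta$-invariance of $P_k$ for all $k$ is strong enough to upgrade to $\beta$-invariance of the limit $P$. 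Once that bookkeeping is settled, the K\"onig's lemma step is clean.
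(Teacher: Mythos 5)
Your proposal is essentially correct, but the key step is carried out by a different (and somewhat more elementary) device than the paper's. Where you build a finitely branching tree whose level-$k$ vertices are the $\beta$-invariant $k$-precones, connect levels by the restriction maps $Q\mapsto Q\cap W_k$, and extract a nested family via K\"onig's lemma, the paper instead topologizes the power set $2^{F_n}\cong\{0,1\}^{F_n}$ with the product topology, shows that the set $\mathcal{S}_k$ of subsets whose intersection with $W_k$ is a $\beta$-invariant $k$-precone is closed, and uses Tychonoff compactness plus the nested-closed-sets lemma to find a single $P$ with $P\cap W_k$ an invariant $k$-precone for every $k$ simultaneously; both routes then finish identically by checking that the union of a nested family of invariant precones is a $\beta$-invariant positive cone (the paper's Lemma~\ref{lem:preconetocone}). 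Your tree argument does require the verification you flag, namely that restriction sends $\beta$-invariant $(k+1)$-precones to $\beta$-invariant $k$-precones, and this does go through with the paper's definitions, so the K\"onig's lemma step is sound; its advantage is that it avoids any topology, while the paper's compactness formulation is the standard Calegari--Dunfield/Clay--Rolfsen style argument and sidesteps building the tree. One small correction: $\beta$ does \emph{not} permute the ball $W_k$ (automorphisms lengthen words), and no ``enlargement of $k$'' is needed anywhere; the paper's Definition~\ref{def:preconepreserve} only requires $\beta(P_k)\cap W_k\subset P_k$, under which the forward direction is immediate from $\beta(P)=P$, and in the limit step you should likewise only use $\beta(P_k)\cap W_k\subset P_k$ (not $\beta(P_k)=P_k$), which already gives $\beta(P)\subset P$ and hence, by trichotomy for the positive cone $\beta(P)$, equality.
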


\begin{restatable}{numalg}{} (Summary)
The algorithm takes as input a braid $\beta$ on $n$ strands. For each positive integer $k$ starting with $k=1$, the algorithm builds all $k$-precones of the free group of rank $n$ that are preserved by $\beta$.
If no $k$-precones exist, the algorithm terminates and returns a proof that $\beta$ is not order-preserving.
If there exists at least one $k$-precone preserved by $\beta$, $k$ is increased by one and the process repeats.
\label{The_alg}
\end{restatable}

\begin{restatable}{thm}{alg_terminates}
Given a braid which is not order-preserving, Algorithm \ref{The_alg} will terminate in finite time.
\label{thm:alg_terminates}
\end{restatable}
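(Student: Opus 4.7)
The plan is to combine \Cref{prop:whyalgworks} with a finiteness argument at each stage of the algorithm. By the contrapositive of \Cref{prop:whyalgworks}, if $\beta$ is not order-preserving, then there exists a positive integer $k_0$ such that no $k_0$-precone of $F_n$ is preserved by $\beta$. It therefore suffices to show that for each fixed $k$, the inner loop of the algorithm---the construction of all $k$-precones of $F_n$ preserved by $\beta$---terminates in finite time. Given this, the algorithm completes stages $k = 1, 2, \ldots, k_0$ in finite time, finds no preserved $k_0$-precones at stage $k_0$, and halts.

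To verify finite-time completion of each stage, I would first argue that the collection of $k$-precones of $F_n$ is finite for every $k$. Since a $k$-precone is defined in \Cref{sect:braidsandorders} as a finite truncation of a positive cone, its elements are drawn from a fixed finite subset of $F_n$ determined by $k$ and $n$, so there are only finitely many candidate $k$-precones. Next, I would show that checking whether $\beta$ preserves a given $k$-precone is effectively decidable: the induced action of $\beta$ on $F_n$ is given by an explicit automorphism, so its effect on any finite list of words can be computed, and the resulting words can be compared against the precone using the (solvable) word problem in $F_n$. Together these give a brute-force upper bound on the work done at stage $k$. If the algorithm instead builds preserved precones incrementally by extending $(k-1)$-precones, the same finiteness applies to the depth and breadth of the resulting search tree.

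The main obstacle I anticipate is pinning down the finiteness of the $k$-precones precisely from the definition set up in \Cref{sect:braidsandorders}: although $F_n$ is infinite, the notion of a $k$-precone must constrain its elements to some finite set depending only on $k$ and $n$ (for example, reduced words up to some bounded length in the free generators). A secondary point requiring care is to ensure that the incremental extension step from level $k-1$ to level $k$ has only finitely many options at each node, so that the recursive construction does not secretly invoke an infinite search. Both issues should follow from the precise setup given earlier in the paper, after which the theorem reduces to a direct application of \Cref{prop:whyalgworks} together with these finiteness observations.
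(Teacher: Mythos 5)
Your proposal is correct and follows essentially the same route as the paper: invoke Proposition \ref{prop:whyalgworks} (in contrapositive form) to get a $k_0$ with no preserved $k_0$-precone, and note that for each fixed $k$ the search is finite because $k$-precones are subsets of the finite set $W_k$ and preservation is checkable using the solvable word problem in $F_n$. The finiteness worry you flag is already settled by Definition \ref{def:kprecone}, since a $k$-precone lives inside $W_k$, the reduced words of length at most $k$.
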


Algorithm \ref{The_alg} is implemented in Python and available for use at \cite{JST23}. Our algorithm and its implementation are inspired by an algorithm of Calegari and Dunfield which certifies non-left-orderability of a finitely presented group in finite time; if the group is left-orderable, their algorithm will not halt \cite{CD03, ND20}.
Similarly, our algorithm will not terminate if applied to a braid that is order-preserving.
Because our algorithm is applied to braids, we implement braid-specific improvements to increase the efficiency of Algorithm \ref{The_alg}. For instance, the following theorem allows us to restrict to building positive cones for the subgroup of words in $F_n$ whose exponent sum is zero.

\begin{restatable}{thm}{allgenspos}
   An $n$-strand braid $\beta$ is order-preserving if and only if $\beta$ preserves a positive cone $P$ where any word in $F_n$ with positive exponent sum is in $P$.  
\label{thm:allgenspos}
\end{restatable}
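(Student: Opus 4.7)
The plan is to prove the two directions separately. The ($\Leftarrow$) direction is immediate: a preserved positive cone is by definition a witness that $\beta$ is order-preserving. For ($\Rightarrow$), I would start from any bi-order positive cone $P'$ of $F_n$ preserved by $\beta$ and build a new $P$ with the required property by a lexicographic refinement using the total exponent sum.

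The key observation will be that the Artin action of $B_n$ descends to a permutation action on the abelianization: each standard generator $\sigma_i$ sends $x_i \mapsto x_i x_{i+1} x_i^{-1}$ and $x_{i+1} \mapsto x_i$, which in $F_n^{\mathrm{ab}} \cong \Z^n$ simply swaps $x_i$ and $x_{i+1}$. Consequently the exponent-sum homomorphism $e : F_n \to \Z$ is $\beta$-invariant, and its kernel $K$ is a $\beta$-invariant subgroup on which $P' \cap K$ restricts to a bi-order positive cone preserved by $\beta|_K$.

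The candidate new cone is
$$ P := \{w \in F_n \setminus \{1\} : e(w) > 0\} \,\cup\, \{w \in K \setminus \{1\} : w \in P'\}. $$
I would then verify the following, each of which splits into cases by the sign of $e$ on the elements involved: (i) trichotomy $P \sqcup P^{-1} = F_n \setminus \{1\}$, using the corresponding property of $P'$ in the zero-sum case; (ii) closure under multiplication, where the only nontrivial case is two zero-sum factors and follows because $P' \cap K$ is closed under multiplication; (iii) conjugation invariance, since conjugation preserves $e$ and $P'$ is conjugation-invariant as a bi-order cone; and (iv) $\beta$-invariance, because $\beta$ preserves both $e$ and $P'$. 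By construction every word of positive exponent sum lies in $P$.

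There is no serious obstacle — this is essentially the standard lexicographic construction of bi-orders from the short exact sequence $1 \to K \to F_n \xrightarrow{e} \Z \to 1$, together with the routine fact that the braid action respects $e$. The only point requiring care is the last item, where one must check that a braid acts trivially on $\Z$ via $e$; once that is in hand all the remaining verifications are bookkeeping.
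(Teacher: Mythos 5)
Your proposal is correct and follows essentially the same route as the paper: intersect the given preserved cone with the zero-exponent-sum kernel $K_0$ and rebuild a cone lexicographically via the exponent-sum map $t\colon F_n\to\Z$, exactly as in the paper's proof of Theorem \ref{thm:allgenspos} (which invokes Proposition \ref{prop:inducedcone} and Lemma \ref{lem:betakzero} for the verifications you list). The only cosmetic difference is your description of the Artin action, which is the inverse convention of Equation (\ref{eq:artin}), but either way the induced action on the abelianization is a coordinate permutation, so exponent sum is preserved and the argument goes through unchanged.
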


Even with these braid-specific advantages, it is natural to question how long it will take Algorithm \ref{The_alg} to terminate.
For a fixed $k$, one can ask the computational complexity of the implementation of Algorithm \ref{The_alg}. In terms of answering Question \ref{ques:OP} we are also interested in knowing how large  $k$ needs to be in order to find an obstruction to the existence of a preserved $k$-precone.

\begin{question}\label{ques:open}
For a non-order-preserving braid $\beta$, how large does $k$ need to be for  Algorithm \ref{The_alg} to determine that $\beta$ does not preserve a $k$-precone? Is there a relationship between the length of the braid word for $\beta$ and the minimal $k$  for  Algorithm \ref{The_alg} to find a contradiction?
\end{question}

    In Section \ref{sec:alg}, we discuss some complexity measurements of parts of Algorithm \ref{The_alg}, but in general, Question \ref{ques:open} is still open.
In fact, if one could answer  Question \ref{ques:open}, then Algorithm \ref{The_alg} would become an algorithm to detect order-preserving braids, which is much stronger than the current claim of this paper.
Regardless of the theoretical measures of efficiency,  the implemented Algorithm \ref{The_alg} found a an order-preserving obstruction for the braid $\sigma_1\sigma_2^{-3}$ with $k=4$ in a matter of seconds.
The implemented algorithm not only found the contradiction, but outputted a proof that $\sigma_1\sigma_2^{-3}$ was not order-preserving. From this proof, we were able to generalize the argument to show a new infinite family of braids are not order-preserving. 

\begin{restatable}{thm}{braidfam}
The braids $\sigma_1\sigma_2^{2m+1}$ are not order-preserving for any integer $m$.
\label{thm:newbraidfamily}
\end{restatable}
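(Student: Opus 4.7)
The approach is by contradiction, using Theorem \ref{thm:allgenspos}. Assume $\beta_m := \sigma_1\sigma_2^{2m+1}$ preserves a bi-order of $F_3$; then the positive cone $P$ can be chosen to be $\beta_m$-invariant and to contain every element of positive exponent sum. I would then exhibit a single element $w \in F_3$ of exponent sum zero such that $\beta_m$-invariance of $P$ forces both $w \in P$ and $w^{-1} \in P$, yielding a contradiction for every integer $m$.

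First, I would write down the action of $\beta_m$ explicitly on the generators $x_1, x_2, x_3$. Using $\sigma_2(x_2) = x_2 x_3 x_2^{-1}$ and $\sigma_2(x_3) = x_2$, the iterates $\sigma_2^{2m+1}(x_2)$ and $\sigma_2^{2m+1}(x_3)$ satisfy a simple recursion whose combinatorial structure is uniform in $m$; the parity of the exponent matters because the induced permutation of $\{x_2, x_3\}$ on the abelianization is a transposition exactly for odd exponents. Composing with $\sigma_1$ then yields formulas $\beta_m(x_j)$ that are alternating products of conjugates with a predictable pattern.

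Next, guided by the algorithm-produced proof for $\sigma_1\sigma_2^{-3}$ (the case $m=-2$), I would pick a short test word $w$ whose image $\beta_m^{\pm 1}(w)$ decomposes as a product of factors each of determinable sign. Applying the positive cone axioms (closure under multiplication and conjugation together with trichotomy) to this decomposition, I would deduce first that $w > 1$ is forced, and then, by a symmetric argument on $\beta_m^{-1}$ or on a related commutator, that $w < 1$ is also forced. The uniformity in $m$ comes from arranging the argument so that the $\sigma_2^{2m+1}$-factor contributes to $\beta_m(w)$ only through conjugation by a word of known sign, which is absorbed into the positive cone manipulations without depending on $m$.

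The main obstacle is choosing $w$ so that the $m$-dependence of $\beta_m(w)$ concentrates in such a conjugating factor rather than mixing in complicated ways that grow in combinatorial type with $|m|$. This is plausible because $\sigma_2$ acts as a braid on the rank-$2$ subgroup $\langle x_2, x_3\rangle$, so if $w$ is supported on (or targets) a conjugacy class respected by odd powers of $\sigma_2$, the infinite family of computations should collapse to a single positivity check independent of $m$. The expected payoff is that a short word derived from the $m=-2$ certificate — most plausibly a commutator of two generators or a product of the form $x_i x_j^{-1}$ — works uniformly, turning the one computer-verified case into a family argument covering all integers $m$.
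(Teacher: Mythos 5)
Your proposal correctly identifies the high-level strategy the paper uses (generalize the machine-generated $m=-2$ certificate by arranging the $m$-dependence to sit inside a conjugating factor that the cone axioms absorb), but it stops exactly at the point where the proof actually lives. You never produce the test word, never write the explicit action of $\beta_m$, and never carry out the sign bookkeeping; what you call ``the main obstacle'' --- making the $\sigma_2^{2m+1}$-contribution appear only through conjugation by a word of known handling --- is asserted to be ``plausible'' rather than resolved, and it is precisely the nontrivial content of the theorem. The paper resolves it with a concrete device you do not supply: it replaces $\beta=\sigma_1\sigma_2^{2m+1}$ by $f=\psi\circ\beta$, where $\psi$ is conjugation by $w^{-m}$ with $w=x_2^{-1}x_1x_2x_3$, which is legitimate because an inner automorphism preserves exactly the same positive cones (Lemma \ref{lem:inner_conj}). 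With that choice, $f(x_2)$ and $f(x_3)$ are completely independent of $m$, and the only $m$-dependence is the conjugating factor $w^{\pm m}$ in $f(x_1)$, which is later eliminated by conjugation-invariance of $P$ (using the identity $w^m\cdot x_2^{-1}x_1x_2x_3 = w^{m+1}$). Without exhibiting such a $\psi$ (or an equivalent mechanism) and verifying these formulas, your argument has no content for general $m$.

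There is also a structural gap in the shape of the contradiction you propose. You claim a single zero-exponent-sum word $w$ is forced to satisfy both $w\in P$ and $w^{-1}\in P$; but after the harmless normalization $x_1^{-1}x_2\in P$ (Remark \ref{rem:favorite_elt}), the paper still needs a genuine case split on the sign of $x_2^{-1}x_3$, with a different contradiction word in each branch: in one branch the product $x_1^{-1}x_2\cdot f(x_2^{-1}x_3)\cdot x_3x_2^{-1}$ collapses to the identity, and in the other one derives $x_3x_1^{-2}x_2\in P$, applies $f$, conjugates by $x_2^{-1}x_1x_2w^{-m}$ to get $x_2^{-2}w\in P$, and contradicts this using $x_2^{-1}x_1,\,x_3x_2^{-1}\in P^{-1}$. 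Your single-word, no-branching plan is not supported by any computation, and your appeal to Theorem \ref{thm:allgenspos} plays no role in closing it. As written, the proposal is a research plan consistent with the paper's approach, not a proof.
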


This family is among the simplest class of braids whose order-preserving properties remained unknown.
The braid group is isomorphic to the mapping class group of the punctured disk $\Mod(D_n)$. With this in mind, braids can be classified by their Nielsen-Thurston type as either periodic, pseudo-Anosov, or reducible. Kin and Rolfsen classified which 2-braids and which periodic braids are order-preserving \cite{KR-BraidsOrderings}. A natural next class to consider is pseudo-Anosov braids 3-braids.

Every psuedo-Anosov 3-braid is conjugate to  $h^d\sigma_1\sigma_2^{-a_1}\cdots \sigma_1\sigma_2^{-a_n}$ with $a_i\geq 0$ with at least one $a_i\neq 0$ where $h=(\sigma_1\sigma_2)^3$ is the full twist \cite{Mur-3braids}. Considering this classification, the simplest family of 3-braids is $\sigma_1\sigma_2^{-a}.$ We note that Kin and Rolsen showed that when $a=1$ the braid is not order-preserving; Theorem \ref{thm:newbraidfamily} extends this result to an infinite family when $a$ is odd. Recent work of Cai, Clay, and Rolfsen shows that these braids \emph{are} order-preserving when $a\equiv 2$ mod $4$ \cite{CCR-Ordered-bases}. One of our goals in creating and implementing the above algorithm is to increase examples of braids known to be not order-preserving, especially among pseudo-Anosov 3-braids. Theorem \ref{thm:newbraidfamily} is a concrete step towards this goal.

Theorem \ref{thm:newbraidfamily} is written as an algebraic statement about the braid group; there is however, a topological interpretation of our result. 
The study of order-preserving braids fits into a much larger schema of studying the bi-orderability of link complements \cite{CCR-Ordered-bases,CDN16,Ito17,JJ1,JJ2,PerRolf03,Yam17}. 
A link, $L$, is said to be \emph{bi-orderable} if $\pi_1(S^3-L)$ is bi-orderable.

\begin{prob}
\label{ques:bio-links}
Classify bi-orderable links in $S^3$.
\end{prob}

A \emph{braided link} is the closure $\widehat{\beta}$ of an $n$-strand braid $\beta$ together with the braid axis, as pictured in Figure \ref{pic:closedbraid}.
Utilizing the structure of the braided link complement, Kin-Rolfsen show that Problem \ref{ques:bio-links} is equivalent to Question \ref{ques:OP} for braided links \cite{KR-BraidsOrderings}. Algorithm \ref{The_alg} thus finds bi-orderability obstructions for braided links. Furthermore, Theorem \ref{thm:newbraidfamily} determines that the links obtained from taking the closure of $\beta=\sigma_1\sigma_2^{2m+1}$ together with their braid axes, are not bi-orderable for any integer $k.$

\subsection{Organization of the paper.} In Section \ref{sect:braidsandorders} we define the explicit action of a braid on the free group that is used to define order-preservingness of a braid, and organize some background information about braids and bi-orders of groups. Finding a bi-order of the free group preserved by a braid can be reduced to finding a certain order on zero-exponent sum words in the free group, as we show in Section \ref{sec:0-sum}. We describe our algorithm and its implementation in Section \ref{sec:alg}. In Section \ref{sec:Family}, we prove Theorem \ref{thm:newbraidfamily}.
\subsection{Acknowledgments}
This material is based upon work supported by then NSF grant DMS-1929284 while the three authors were in residence at the Institute for Computational and Experimental Research in Mathematics in Providence, RI, during the `Braids' program. We continued the collaboration through the Collaborate@ICERM program. We are very grateful to ICERM for making the collaboration possible and supporting our work. Part of this research was conducted using computational
resources and services at the Center for Computation and Visualization, Brown University. The first author was supported in part by NSF grant DMS-2213213. The second author was partially supported by an AMS-Simons travel grant and NSF grant DMS-2532699. The third author was also partially supported by NSF grant DMS-1745583.

\section{Braids and orders} \label{sect:braidsandorders}

Throughout this section we use Artin's presentation of the $n$-strand braid group $B_n;$ see Figure \ref{pic:Artingen} for our convention for the generators $\sigma_i$. The braid group $B_n$ embeds as a subgroup of the automorphism group,  $\text{Aut}(F_n)$, of the free group, $F_n$, via the following action, where $F_n=\langle x_1,\cdots ,x_n\rangle$.
 
 \begin{equation} \label{eq:artin}
 \sigma_i\mapsto \begin{cases}
 x_i\mapsto x_{i+1} \\
  x_{i+1}\mapsto x_{i+1}^{-1}x_{i}x_{i+1} \\
  x_j\mapsto x_j 
  \end{cases}
 \end{equation}

 \begin{figure}
\centering
\subfloat[Subfigure 1 list of figures text][]{\includegraphics[width=0.25\textwidth]{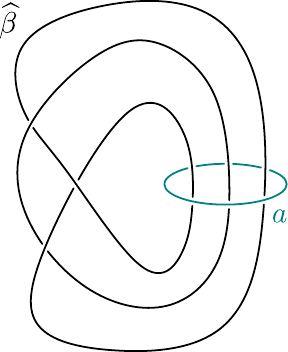}
\label{pic:closedbraid}
}
\quad\qquad
\subfloat[Subfigure 2 list of figures text][]{
\includegraphics[width=0.3\textwidth]{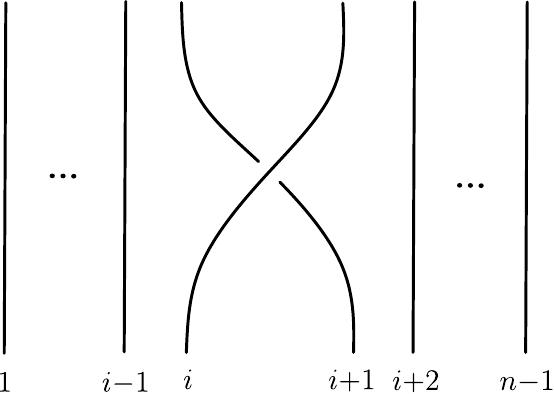}
\label{pic:Artingen}}

\caption{(A) The closure $\widehat{\beta}$ of 
 of a 3-braid $\beta$ together with its axis $a$ forms a braided link. (B) The Artin generator $\sigma_i$. }
\label{pic:introtobraids}
\end{figure}

 This action comes from the identification of $B_n$ with the mapping class group $\Mod(D_n)$ of the $n$-punctured disk. Thus the braid group acts on $\pi_1(D_n)\cong F_n$. We read braid words from \emph{right to left} so that braids act on elements of $F_n$ on the left. This also means that we read the action of $\sigma_i$ on the punctured disk by tracing the paths of the strands as we flow up the braid so that the $i^{th}$ puncture passes in front of the $(i+1)^{th}$ puncture; see Figure \ref{fig:beta-action} for our conventions.
We note that the action of $\beta$ on $F_n$ with our conventions is the inverse automorphism which Kin and Rolfsen consider for the same $\beta.$

\begin{figure}
    \centering
    \includegraphics[width=0.9\linewidth]{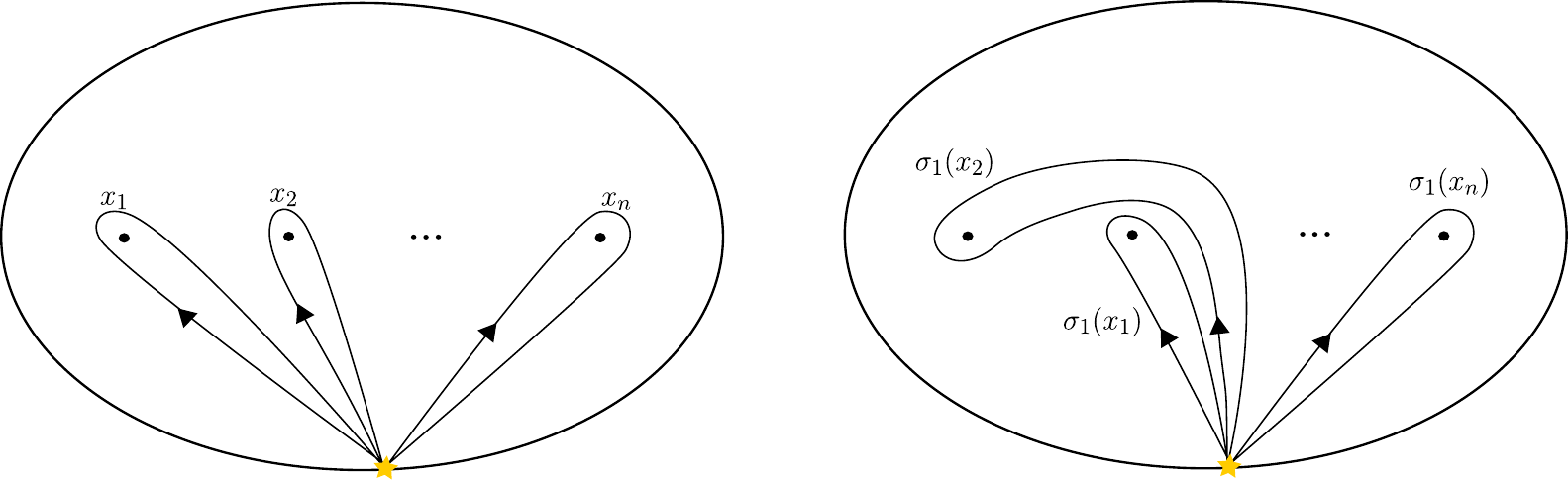}
    \caption{Our conventions for the induced action of the $n$-strand braid $\sigma_1$ on the generators of $\pi_1(D_n)$.}
    \label{fig:beta-action}
\end{figure}

\begin{definition}
\label{def:poscone}
    A subset $P\subset F_n$ is a \emph{(conjugate invariant) positive cone} if
    \begin{enumerate}
        \item $P\cdot P\subset P$,
        \item $F_n=P\sqcup P^{-1}\sqcup\{1\}$, and
        \item $gPg^{-1}=P$ for all $g\in F$.
    \end{enumerate}
\end{definition}

A positive cone $P$ determines a bi-order in the following way: say that $f<g$ if and only if $f^{-1}g\in P$. This definition automatically guarantees the order will be left-invariant. Condidition (1) assures transitivity, and (2) gives totality and strictness. Condition (3) is equivalent to $<$ being right-invariant. Thus defining $<$ in this way from $P$ gives a bi-order of $F_n$. Conversely, given a bi-order $<$ on $F_n$, the set of elements greater than the identity determines a positive cone of $F_n$.

\begin{definition}
    An $n$-strand braid $\beta$ is called \emph{order-preserving} if there exists a positive cone $P$ of $F_n$ preserved by $\beta$. That is $\beta(P)=P$, set-wise.
\end{definition}

\begin{remark}\label{rem:favorite_elt}
Given a positive cone $P$, the set $P^{-1}$ is also a positive cone (for the opposite order obtained by flipping all inequalities). Furthermore, a braid preserves $P$ if and only if it preserves $P^{-1}$. Because of this, when $\beta$ preserves any positive cone $P$ of $F_n$, there is no loss of generality in assuming that your favorite non-trivial element of $F_n$ is in $P$. 
\end{remark}

For each braid $\beta$ we obtain a link in $S^3$ by taking the union of the closure $\hat{\beta}$ of $\beta$ with the braid axis $a$; see Figure \ref{pic:closedbraid}.

\begin{prop}[{\cite[Proposition 4.1]{KR-BraidsOrderings}}] The braided link $\hat{\beta}\cup a$ is bi-orderable if and only if the action $\beta$ on $F_n$ preserves some bi-order on $F_n$
    
\end{prop}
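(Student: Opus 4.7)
The plan is to translate the topological bi-orderability statement into an algebraic statement about a semidirect product, and then invoke a standard fact from the theory of ordered groups.

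First I would identify the complement of the braided link with a mapping torus. The complement of the unknotted axis $a$ in $S^3$ is an open solid torus which fibers over $S^1$ with fiber a disk $D^2$. The closure $\widehat{\beta}$ sits inside this solid torus as the trace of the $n$ punctures under the braid, so removing $\widehat{\beta}$ as well yields a manifold that fibers over $S^1$ with fiber the $n$-punctured disk $D_n$ and monodromy $\beta \in \mathrm{MCG}(D_n) \cong B_n$. In other words, $S^3 \setminus (\widehat{\beta} \cup a)$ is (homeomorphic to) the mapping torus
\[
M_\beta = D_n \times [0,1] \big/ (z,0) \sim (\beta(z),1).
\]
From the fibration $D_n \hookrightarrow M_\beta \to S^1$ and the fact that $D_n$ is a $K(F_n,1)$, the long exact sequence of homotopy groups gives a short exact sequence
\[
1 \longrightarrow F_n \longrightarrow \pi_1(S^3 \setminus (\widehat{\beta} \cup a)) \longrightarrow \mathbb{Z} \longrightarrow 1,
\]
which splits (via a section $S^1 \to M_\beta$), so $\pi_1(S^3 \setminus (\widehat{\beta} \cup a)) \cong F_n \rtimes_\beta \mathbb{Z}$, where conjugation by a generator of $\mathbb{Z}$ realizes the automorphism $\beta$ of $F_n$ given in \eqref{eq:artin}.

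Next I would prove the algebraic equivalence: for any automorphism $\varphi$ of $F_n$, the semidirect product $F_n \rtimes_\varphi \mathbb{Z}$ is bi-orderable if and only if $\varphi$ preserves some bi-order (equivalently, some positive cone) on $F_n$. For the forward direction, if $<$ is a bi-order on $G := F_n \rtimes_\varphi \mathbb{Z}$ with positive cone $P$, then $P \cap F_n$ is a positive cone for $F_n$ satisfying Definition~\ref{def:poscone}, and since bi-orders are always conjugation-invariant, conjugating by the generator $t$ of $\mathbb{Z}$ shows $\varphi(P \cap F_n) = P \cap F_n$. For the converse, given a $\varphi$-invariant positive cone $P \subset F_n$, define a positive cone on $G$ lexicographically: an element $w t^k$ is declared positive if either $k>0$, or $k=0$ and $w \in P$. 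The hypothesis that $\varphi(P) = P$ together with conjugation-invariance of $P$ in $F_n$ is exactly what is needed to check that this cone is closed under multiplication and under conjugation by every element of $G$; trichotomy is immediate.

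Combining these two steps, bi-orderability of $\pi_1(S^3 \setminus (\widehat{\beta} \cup a)) \cong F_n \rtimes_\beta \mathbb{Z}$ is equivalent to the existence of a $\beta$-invariant bi-order on $F_n$, which is the desired conclusion. The main technical point to be careful about is the second step: one must verify that the lexicographic positive cone is conjugation-invariant in the full semidirect product, for which both the $\varphi$-invariance of $P$ and the fact that $P$ is already a \emph{bi-order} positive cone on $F_n$ are essential; if $P$ were only a left-order, the construction would fail. Everything else (the mapping-torus identification and the split short exact sequence) is a routine application of the fact that braids act on $D_n$ as the mapping class group.
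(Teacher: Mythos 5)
Your proposal is correct: the mapping-torus identification of the braided link complement, the resulting split extension $1\to F_n\to \pi_1\to \mathbb{Z}\to 1$, and the lexicographic positive cone on the semidirect product (using both the $\varphi$-invariance and the conjugation-invariance of the cone on $F_n$) together give a complete argument. The paper does not reprove this proposition but simply cites Kin--Rolfsen, and your argument is essentially the standard one used there, so there is nothing further to reconcile; the only cosmetic point is that the paper's braid action is the inverse of Kin--Rolfsen's, which is harmless since an automorphism preserves a bi-order if and only if its inverse does.
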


\subsection{Precones}\label{sec:precones}
Let $k\geq 1$, and
let $W_k$ be the set of reduced words in $x_1,\ldots,x_n$ of length less than or equal to $k$. A $k$-precone of $F_n$ is the analog of a positive cone of $F_n$ restricted to words of length $k$, as made precise in the following definition.

\begin{definition}\label{def:kprecone}
    A subset $P$ is a \emph{$k$-precone} of $F_n$ if
    \begin{enumerate}
        \item $(P\cdot P)\cap W_k\subset P$,
        \item $W_k=P\sqcup P^{-1}\sqcup\{1\}$, and
        \item $(gPg^{-1})\cap W_k\subset P$ for all $g\in W_k$.
    \end{enumerate}
\end{definition}

Notice that a $k$-precone is not necessarily closed under multiplication (or conjugation) since many products of elements in the precone may be too long. Given a subset $S$ of $F_n$ there is an action of the braid on this subset which we denote by $\beta(S)$.

\begin{definition} \label{def:preconepreserve}
    
    Given $k\in\N$, a $k$-precone $P_k$ is \emph{preserved by} an automorphsim $\varphi$ if the intersection\\$\varphi(P_k)\cap W_k\subset P_k$. 
\end{definition}

We point out that if $P$ is a positive cone of $F_n$ preserved by $\beta$, then $P\cap W_k$ for any positive integer $k$ is a $k$-precone preserved by $\beta$ by checking the definitions. There can be many different cones that have the same $k$-precone for a given $k$.
The following proposition also asserts the converse statement.

\whyalgworks*

Proposition \ref{prop:whyalgworks} is the crucial result used in our algorithm.
To show a braid $\beta$ does not preserve a positive cone of $F_n$, it suffices to show that for some $k$, $\beta$ does not preserve any $k$-precones of $F_n$.
For any fixed $k$, there are a finite number of $k$-precones of $F_n$, each with finite cardinality. 
So Proposition \ref{prop:whyalgworks} reduces the infinite problem to a finite problem -- assuming the braid does not preserve any order.

The proof of Proposition \ref{prop:whyalgworks} uses a well-known style of argument which is described in the proof of a similar statement in Section 1.6 of Clay and Rolfsen's book \emph{Ordered Groups and Topology} \cite{CR16}. For the convenience of the reader, we've included a proof here.

Towards this goal, we prove some necessary lemmas. First, we show that the union of a family of nested precones is a positive cone preserved by the braid $\beta$.

\begin{lem} \label{lem:preconetocone}
    Suppose that for each positive integer $k$, we have a $k$-precone $P_k$ of $F_n$ preserved by $\beta$. If $P_k\subset P_l$ for all $k\leq l$, then $P=\bigcup P_k$ is a positive cone of $F$ preserved by $\beta$.
\end{lem}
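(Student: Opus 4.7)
The plan is to verify the three axioms of a positive cone from Definition \ref{def:poscone} together with $\beta$-invariance, using in each case the same ``choose $k$ large enough'' trick: any finite collection of words we care about (the elements being multiplied, conjugated, or mapped by $\beta$, together with the resulting word) all lie in $W_k$ for some sufficiently large $k$, and by the nesting hypothesis they also lie in $P_k$ as needed.

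First I would check that $P \cdot P \subset P$. Given $f, g \in P$, I would pick $k \geq \max(|f|, |g|, |fg|)$ so that $f, g \in P_k$ by nesting and $fg \in W_k$; then condition (1) of Definition \ref{def:kprecone} for $P_k$ forces $fg \in P_k \subset P$. Next I would check the partition $F_n = P \sqcup P^{-1} \sqcup \{1\}$: every $g \in F_n$ sits in $W_{|g|}$, hence falls into one of the three pieces of the partition for $P_{|g|}$, so $F_n = P \cup P^{-1} \cup \{1\}$; and any overlap between $P$ and $P^{-1}$ (or between $P$ and $\{1\}$) would produce an overlap inside some large enough $P_k$, contradicting condition (2) for that precone. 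Conjugation invariance is handled the same way: for $f \in P$ and $g \in F_n$, I pick $k \geq \max(|g|, |f|, |gfg^{-1}|)$ large enough so that $f \in P_k$ and apply condition (3) of Definition \ref{def:kprecone}.

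For $\beta$-invariance the same idea gives one inclusion. For each $f \in P$, I would choose $k \geq \max(|f|, |\beta(f)|)$ so that $f \in P_k$, and then Definition \ref{def:preconepreserve} applied to $P_k$ yields $\beta(f) \in \beta(P_k) \cap W_k \subset P_k \subset P$. Hence $\beta(P) \subset P$.

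The one step that is not completely formal is upgrading $\beta(P) \subset P$ to the set equality $\beta(P) = P$ required by the definition of order-preserving. I would argue this from the partition: since $\beta$ is an automorphism it fixes $1$ and sends $P^{-1}$ bijectively onto $\beta(P)^{-1}$, so $\beta(P^{-1}) \subset P^{-1}$ as well. Because $\beta(P) \sqcup \beta(P^{-1}) \sqcup \{1\}$ and $P \sqcup P^{-1} \sqcup \{1\}$ are both partitions of $F_n$ by the bijectivity of $\beta$ and the partition property just proven, the containments $\beta(P) \subset P$ and $\beta(P^{-1}) \subset P^{-1}$ must be equalities. This partition argument is the only part of the proof that requires genuine thought; everything else is a direct-limit bookkeeping exercise.
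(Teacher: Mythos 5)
Your proof is correct and follows essentially the same route as the paper: a direct verification of the cone axioms and of $\beta$-invariance by passing to a sufficiently large $k$ and using the nesting, which the paper packages as the identity $P\cap W_k=P_k$ (note that your claim that $f,g\in P_k$ for $k\geq\max(|f|,|g|,|fg|)$ needs that identity, or else a choice of $k$ at least as large as indices $j$ with $f,g\in P_j$, rather than nesting alone). Your final paragraph upgrading $\beta(P)\subset P$ to $\beta(P)=P$ via the partition and bijectivity of $\beta$ is a worthwhile addition that the paper leaves implicit.
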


\begin{proof}
    To show that $P$ is positive cone preserved by $\beta$ we need to check the three conditions of Definition \ref{def:poscone}, and finally to check that $\beta$ preserves $P$.  
    
    To do these checks, it will be convenient first to show that $P\cap W_k=P_k$. For each $k$ we certainly have that $P_k\subset P\cap W_k$.    
    Now suppose $x\in P\cap W_k$ for some $k$.
    Since $x\in P$, $x\in P_l$ for some $l$.
    If $l\leq k$ then $x\in P_l\subset P_k$.
    Suppose $l>k$.
    Since $x\in W_k$, either $x\in P_k$, $x\in P_k^{-1}$, or $x=1$.
    Since $x\in P_l$, $x\neq 1$.
    Also, since $P_k^{-1}\subset P_l^{-1}$ and $x \in P_l$,  $x$ cannot be in $P_l^{-1}$, nor can $x$ be in $P_k^{-1}$.
    Thus, we must have $x\in P_k$ so $P\cap W_k = P_k$.

    \textbf{Condition (1):}
    Suppose $a,b\in P$.
    For some large enough $k$, we must have $a, b,$ and $ ab\in W_k$.
    Since $P_k=P\cap W_k$, $a,b\in P_k$.
    Since $P_k$ is a precone, $ab\in P_k\subset P$ as desired.

    \textbf{Condition (2):}
    Since $P=\cup P_k$ we also have that $P^{-1}=\cup P_k^{-1}$; we claim that $P\sqcup P^{-1}\sqcup \{1\}=F_n$. Suppose $g\in F_n$ so $g\in W_k$ for some $k$.
    Thus, either we have that $g\in P_k\subset P$, or $g\in P_k^{-1}\subset P^{-1}$, or $g=1$. In any case, $g$ is in $P\sqcup P^{-1}\sqcup \{1\}$ and hence we have that $F_n=P\sqcup P^{-1}\sqcup\{1\}$.

    \textbf{Condition (3):}
    Suppose $g\in F_n$ and $x\in P$.
    For some $k$,we have that $x, gxg^{-1}\in W_k$.
    Since $P_k=P\cap W_k$ and is a precone, we also have that $ gxg^{-1}\in P_k\subset P$.

    \textbf{Preserved by $\beta$:}
    Suppose $x\in P$.
    For some $k$, we have that $x, \beta(x)\in W_k$. Since $P_k=P\cap W_k$ and is a $k$-precone preserved by $\beta$, we have that $\beta(x)\in P_k\subset P$.
\end{proof}

To complete the proof of Proposition \ref{prop:whyalgworks} we need to show that we have a \emph{nested} set of precones. Before showing this, we need the following a point-set topology result.

\begin{lem}[{\cite[Theorem 26.9]{Munkres2000}}]\label{lem:nestednonempty} 
    Suppose $X$ is a compact space with a countable family $\mathcal{C}$ of closed nested subsets.
    If each $C\in\mathcal{C}$ is nonempty then the intersection of all $C\in\mathcal{C}$ is also nonempty.
\end{lem}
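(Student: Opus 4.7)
The plan is to prove the contrapositive using the standard compactness $\Leftrightarrow$ finite intersection property translation. Specifically, I will assume that $\bigcap_{C \in \mathcal{C}} C = \emptyset$ and derive that some individual $C \in \mathcal{C}$ must already be empty, contradicting the hypothesis.

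First, since each $C \in \mathcal{C}$ is closed in $X$, each complement $X \setminus C$ is open. The assumption that the total intersection is empty, combined with De Morgan's law, gives $\bigcup_{C \in \mathcal{C}}(X \setminus C) = X \setminus \bigcap_{C \in \mathcal{C}} C = X$. Thus $\{X \setminus C : C \in \mathcal{C}\}$ is an open cover of the compact space $X$. By the definition of compactness, there is a finite subcover, say $X \setminus C_1, \ldots, X \setminus C_m$, and taking complements one more time yields $C_1 \cap \cdots \cap C_m = \emptyset$.

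The last step is the one place the \emph{nested} hypothesis enters. Among the finite sub-family $\{C_1, \ldots, C_m\}$, the nested structure of $\mathcal{C}$ forces these sets to be linearly ordered by inclusion, so there is a minimum element, say $C_j$, contained in every other $C_i$. Then $\bigcap_{i=1}^{m} C_i = C_j$, and combining with the previous display we conclude $C_j = \emptyset$, contradicting the hypothesis that every member of $\mathcal{C}$ is nonempty.

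There is no real obstacle here; the statement is a standard consequence of the finite intersection property (exactly the result cited from Munkres), and the main point is just the observation that inside a nested family the minimum of any finite sub-collection realizes the intersection. I note that the countability hypothesis in the statement is not actually needed for the argument, since the compactness step produces a finite subcover from an arbitrary open cover; countability is presumably stated only because in the intended application (building a positive cone as a nested union of $k$-precones) the family is indexed by $\mathbb{N}$.
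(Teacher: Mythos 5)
Your proof is correct: it is exactly the standard finite-intersection-property argument behind the result the paper simply cites (Munkres, Theorem 26.9) without proof, so you have in effect supplied the proof the authors delegate to the reference. Your side remarks are also accurate — the nestedness is used only to see that a finite subfamily has empty intersection iff its minimal member is empty, and the countability hypothesis is indeed superfluous for the argument, appearing only because the intended application indexes the precones by $k \in \mathbb{N}$.
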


\begin{lem}
\label{lem:preconesnested}
    Suppose that $\beta$ preserves a $k$-precone for each positive integer $k$. Then $\beta$ preserves a set of $k$-precones for each positive integer $k$ which are nested.
\end{lem}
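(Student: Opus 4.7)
The plan is to apply Lemma \ref{lem:nestednonempty} to the compact space $X = \{0,1\}^{F_n}$, equipped with the product topology (compact by Tychonoff's theorem), whose points we identify with subsets of $F_n$ via characteristic functions. For each positive integer $k$, define
\[
C_k = \{S \in X : S \cap W_k \text{ is a } k\text{-precone of } F_n \text{ preserved by } \beta\}.
\]
The goal is to show the $C_k$ form a countable nested family of nonempty closed subsets; then any $S \in \bigcap_k C_k$ produces the required nested precones via $P_k := S \cap W_k$.

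First I would verify that each $C_k$ is closed. This is where compactness does the real work: because $W_k$ is finite, the defining conditions of a $k$-precone (Definition \ref{def:kprecone}) and the preservation condition $\beta(S \cap W_k) \cap W_k \subset S \cap W_k$ each restrict only finitely many coordinates of $S$ (namely those indexed by $W_k$), making $C_k$ a clopen cylinder set. Nonemptiness of $C_k$ is immediate from the hypothesis: if $Q$ is any $k$-precone preserved by $\beta$, then $S := Q$ (or any extension of $Q$ to a subset of $F_n$) lies in $C_k$.

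Next I would show the nesting $C_{k+1} \subset C_k$. Suppose $S \in C_{k+1}$, so that $Q := S \cap W_{k+1}$ is a $(k+1)$-precone preserved by $\beta$. I would check directly that $Q \cap W_k = S \cap W_k$ satisfies the three precone axioms and the preservation condition: if $a,b \in Q \cap W_k$ with $ab \in W_k$, then $ab \in (Q \cdot Q) \cap W_{k+1} \subset Q$; the trichotomy $W_k = (Q \cap W_k) \sqcup (Q \cap W_k)^{-1} \sqcup \{1\}$ follows by intersecting the analogous statement for $Q$ with $W_k$; the conjugation and $\beta$-preservation conditions follow by the same pattern, using $W_k \subset W_{k+1}$ to pull the relevant elements into the domain where $Q$'s properties apply.

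With closedness, nonemptiness, and nesting in hand, Lemma \ref{lem:nestednonempty} provides some $S \in \bigcap_k C_k$. Setting $P_k := S \cap W_k$ gives a family of $k$-precones preserved by $\beta$ for every $k$, and they are automatically nested because $P_k = (S \cap W_{k+1}) \cap W_k \subset S \cap W_{k+1} = P_{k+1}$. The main conceptual obstacle is recognizing that the problem, which at first appears to require a coherent infinite choice across all $k$, is precisely the compactness pattern captured by Lemma \ref{lem:nestednonempty}; the computational verifications are routine once the framework is set up.
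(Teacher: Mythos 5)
Your proof is correct and follows essentially the same route as the paper: identify $2^{F_n}$ with $\{0,1\}^{F_n}$, show the sets of subsets restricting to preserved $k$-precones are closed, nonempty, and nested, and apply Tychonoff together with Lemma \ref{lem:nestednonempty} to extract a common $S$ whose truncations $S\cap W_k$ give the nested precones. The only difference is cosmetic: you spell out the verification that the $C_k$ are nested (i.e., that a preserved $(k+1)$-precone restricts to a preserved $k$-precone), which the paper asserts without proof.
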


\begin{proof}
  Consider the powerset of $F_n$, denoted $2^F$.
    Each $A\in 2^F$ is identified with an indicator function $f_A:F_n\to\{0,1\}$ defined as follows.
    \[
        f_A(g)=
        \begin{cases}
        1 & g\in A \\
        0 & g\notin A
        \end{cases}
    \]
    The powerset $2^F$ can be given a topology by identifying it with the product topology on $\{0,1\}^{F_n}$.
    Here we use the discrete topology on $\{0,1\}$.

    Given an element $g\in F_n$, define $U_g$ to be the collection of subsets of $F_n$ which contain $g$ so $U_g^c=2^F-U_g$ is the collection of sets which do not contain $g$.
    For each $g$, $U_g$ and $U_g^c$ are open in $2^F$.

    For each $k$, define $\mathcal{S}_k\subset 2^F$ be the collection of all subsets $A\subset F_n$ such that $A\cap W_k$ is a $k$-precone of $F_n$ preserved by $\beta$.
    This is a nested family as follows.
    \[
        \mathcal{S}_1\supset\mathcal{S}_2\supset\mathcal{S}_3\supset\cdots
    \]
    Since $\beta$ preserves a $k$-precone for each $k$, each $\mathcal{S}_k$ is nonempty.
    \bigskip{}

    \noindent\textit{Claim: Each $\mathcal{S}_k$ is a closed subset of $2^F$.}
    \bigskip{}

    Consider a ``point" $S$ in $\mathcal{S}_k^c=2^F-\mathcal{S}_k$; then $S$ is a subset of $F_n$ such that $S\cap W_k$ is not a $k$-precone.
    Since $W_k$ is finite, the subset of $2^F$
    \[
        U_S=\Big(\bigcap_{g\in S\cap W_k} U_g\Big)\cap\Big(\bigcap_{g\notin S^c\cap W_k} U_g^c\Big)
    \]
    is open in $2^F$.
    Note that a set $A$ is in the collection $U$ if and only if $A\cap W_k=S\cap W_k$.
    It follows that $S\in U_S\subset \mathcal{S}_k^c$ for each set $S\in \mathcal{S}_k^c$.
    Thus, $\mathcal{S}_k^c$ is $\bigcup_{S\in \mathcal{S}_k^c} U_S$.
    Since each $U_S$ is open, $\mathcal{S}_k^c$ is open.
    Therefore, $\mathcal{S}_k$ is closed.
    This completes the proof of the claim.

    Since the discrete topology on $\{0,1\}$ is compact, $2^F$ is compact by the Tychonoff theorem \cite{Tych30}.
    Since each $\mathcal{S}_k$ is closed, $\bigcap \mathcal{S}_k$ is also nonempty by Lemma \ref{lem:nestednonempty}.

    Let $P\in \bigcap\mathcal{S}_k$, and let $P_k=P\cap W_k$ for each positive integer $k$.
    Thus, $P=\bigcup P_k$, and when $k\leq l$, $P_k\subset P_l$. Since $P\in\mathcal{S}_k$ for each $k$, each $P_k$ is a $k$-precone preserved by $\beta$, and these cones are nested. 
\end{proof}

With the help of Lemma \ref{lem:nestednonempty} and Lemma \ref{lem:preconesnested}, we are now ready to prove Proposition \ref{prop:whyalgworks}.

\begin{proof}[Proof of Proposition \ref{prop:whyalgworks}]
    As noted in the discussion before Proposition \ref{prop:whyalgworks}, if $\beta$ preserves a positive cone $P$, then $P\cap W_k$ is a $k$-precone preserved by $\beta$.

    Now suppose that for each positive $k$, the braid $\beta$ preserves a $k$-precone. By Lemma \ref{lem:preconesnested} we can assume that these precones are nested. By Lemma \ref{lem:nestednonempty}, the union of these nested precones is a positive cone for $F_n$ preserved by $\beta$.
\end{proof}

\section{Words with zero exponent sum}\label{sec:0-sum}

In order to improve the efficiency of our algorithm, we would like to minimize the number of $k$-precones we need to consider. Towards this end, we focus on words in the free group with zero exponent sum, a subgroup which we call $K_0$. We prove the following general proposition which will imply that certain bi-orders of $K_0$ preserved by $\beta$ are related to bi-orders on the free group preserved by $\beta$.

\begin{prop}\label{prop:inducedcone} Suppose we have the following short exact sequence where $A$ is an abelian group.
\[ 1\rightarrow K\rightarrow F_n\stackrel{\phi}{\rightarrow} A  \rightarrow 1\]

Let $\beta$ be a braid (or more generally any automorphism of $F_n$) with the property $\beta(K)=K$. 
Suppose also that there exists a positive cone $P_A$ of $A$ that is preserved by $\beta_*$ the $\phi$-induced action of $\beta$ on $A$. Then the following statements are equivalent:
\begin{enumerate}
    \item There exists a positive cone $P_K$  of $K$  that is preserved by the action of $\beta$ and invariant under conjugation by elements of $F_n$.
    \item There exists a positive cone $P_F$ of $F_n$ preserved by the action of $\beta$.
\end{enumerate}

\end{prop}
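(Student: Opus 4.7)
My plan is to prove the two implications separately, using the short exact sequence to pull back the cone structure on $A$ and piece it together with the cone structure on $K$.

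For the direction (2) $\Rightarrow$ (1), I would take $P_F$ and simply set $P_K := P_F \cap K$. Since $K$ is a subgroup, all three positive cone axioms for $P_F$ restrict immediately to give the corresponding axioms for $P_K$ inside $K$. Conjugation invariance by all of $F_n$ is inherited directly from $P_F$. Finally, since $\beta(K) = K$ and $\beta(P_F) = P_F$, preservation of $P_K$ under $\beta$ is automatic. This direction is essentially bookkeeping.

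The substantive direction is (1) $\Rightarrow$ (2). Here the plan is to build $P_F$ explicitly by combining the two cones we are given, using $\phi$ to decide which cone applies. Concretely, I would define
\[
  P_F \;=\; \{\, g \in F_n : \phi(g) \in P_A \,\} \;\cup\; \{\, g \in K : g \in P_K \,\}.
\]
In words, an element is declared positive according to its image in $A$ whenever that image is nontrivial, and elements of $K$ (the ``ambiguous'' ones) are resolved using $P_K$. The trichotomy $F_n = P_F \sqcup P_F^{-1} \sqcup \{1\}$ follows from the trichotomies for $P_A$ on $A$ and for $P_K$ on $K$, applied according to whether $\phi(g)$ is trivial or not.

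The main things to verify are then closure under multiplication, conjugation invariance under $F_n$, and $\beta$-invariance. For closure under multiplication, one does a short case analysis on whether each of $\phi(g), \phi(h)$ is trivial or in $P_A$; the only nontrivial case is when both lie in $K$, and it is handled by closure of $P_K$. For conjugation invariance, the key point I would emphasize is that since $A$ is abelian, $\phi(g h g^{-1}) = \phi(h)$, so the ``which piece does it live in'' classification is unchanged under conjugation; the $K$ case then uses that $P_K$ was assumed to be invariant under conjugation by all of $F_n$ (not just $K$), which is precisely why that hypothesis appears in (1). For $\beta$-invariance, I use that $\phi \circ \beta = \beta_* \circ \phi$ on the $A$-piece and that $\beta(K) = K$ with $\beta(P_K) = P_K$ on the $K$-piece. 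The subtlest point, and the one I would flag as the main conceptual step, is the role of the abelianness of $A$ in making conjugation invariance pass from $P_K$ and $P_A$ to the combined cone $P_F$; without it, one could not reduce the conjugation check on $P_F$ to the two already-established invariance properties.
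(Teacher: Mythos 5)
Your proposal is correct and follows essentially the same route as the paper: the forward direction uses the identical construction $P_F=\{x\in F_n : \phi(x)\in P_A,\text{ or }\phi(x)=0\text{ and }x\in P_K\}$ (the paper delegates the routine cone axioms to references and only verifies $\beta$-invariance, which you argue the same way), and the reverse direction is the same intersection $P_K=P_F\cap K$ using normality of $K$ and $\beta(K)=K$. The only quibble is that what conjugation invariance really needs is that $P_A$ be conjugation invariant (automatic here since $A$ is abelian), so abelianness is a convenience of the hypothesis rather than the essential mechanism; this does not affect correctness.
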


\begin{proof}
$(\Rightarrow)$ Given $P_K$, the cone $P_F$ we seek is 

\[
    P_F:=\{x\in F_n\,|\,\phi(x)\in P_A, \text{ or } \phi(x)=0 \text{ with } x\in P_K\}.
\]

It is a routine proof to show that $P_F$ is a conjugate invariant positive cone of $F_n$ ; see \cite[Proposition 2.1]{PerRolf03} and \cite[Problem 1.23]{CR16}.

To see that $P_F$ is invariant under action of $\beta$,
let $x\in P_F$. If $x\notin K$, then $\phi(x)\in P_A$.
Since $P_A$ is closed under the action of $\beta_*$, we have that $\phi(\beta(x))=\beta_*(\phi(x))\in P_A$ so $\beta(x)\in P_F$.
If $x\in K$, then $x\in P_K$. Since $P_K$ is closed under the action of $\beta$, then  $\beta(x)\in P_K$ and $\beta(x)\in P_F$.


$(\Leftarrow)$ Suppose that we have a a conjugate invariant positive cone $P_F$ of $F_n$ preserved by $\beta$. Then we define $P_K:=P_F\cap K$. Since $K$ and $P$ are both conjugate invariant in $F_n$, $P_{K}$ is a positive cone of $K$ that is closed under conjugation by $F_n$.
Since $K$ is preserved by $\beta$ by assumption, then $P_K$ is preserved by $\beta$.
\end{proof}

Let $t:F_n\to\Z$ be the exponent sum map, and let $K_0$ be the kernel of $t$. 

\begin{lemma} \label{lem:betakzero}
    For any braid $\beta$, we have that $t(\beta (w))=t(w)$ for all $w\in F_n$.
\end{lemma}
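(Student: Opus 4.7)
The plan is to reduce the claim to a statement about the abelianization of $F_n$, where the braid acts by a permutation of coordinates.

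First I would observe that the exponent sum map $t \colon F_n \to \Z$ is a group homomorphism which factors through the abelianization $F_n^{\mathrm{ab}} \cong \Z^n$, with the induced map $\Z^n \to \Z$ being the sum of coordinates. Since every element of the braid group $\beta \in B_n$ acts on $F_n$ by an automorphism, it induces an automorphism $\beta^{\mathrm{ab}}$ on $\Z^n$, and $t(\beta(w)) = t(w)$ for all $w$ will follow once I check that $\beta^{\mathrm{ab}}$ preserves the coordinate-sum function on $\Z^n$.

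Next I would compute the induced action of each Artin generator $\sigma_i$ on $\Z^n$ from the formulas in Equation \eqref{eq:artin}. Writing $\bar{x}_j$ for the image of $x_j$ in $\Z^n$, the generator $\sigma_i$ sends $\bar{x}_i \mapsto \bar{x}_{i+1}$, sends $\bar{x}_{i+1} \mapsto -\bar{x}_{i+1} + \bar{x}_i + \bar{x}_{i+1} = \bar{x}_i$, and fixes $\bar{x}_j$ for $j \notin \{i, i+1\}$. Hence $\sigma_i^{\mathrm{ab}}$ is simply the transposition swapping the $i$th and $(i+1)$th coordinates of $\Z^n$. (This reflects the standard quotient map $B_n \twoheadrightarrow S_n$.)

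Since coordinate permutations preserve the sum of entries, each $\sigma_i$ (and therefore each $\sigma_i^{-1}$) preserves exponent sum. Because $\beta$ is a product of the $\sigma_i^{\pm 1}$, the composition $\beta^{\mathrm{ab}}$ acts on $\Z^n$ by the permutation associated to $\beta$, and so preserves coordinate sums. Applying $t$, we conclude $t(\beta(w)) = t(w)$ for every $w \in F_n$. There is no serious obstacle here; the only thing to verify carefully is the abelianization computation for $\sigma_i$ on $\bar{x}_{i+1}$, which simplifies immediately because $\Z^n$ is commutative.
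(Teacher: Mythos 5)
Your proposal is correct and follows essentially the same route as the paper: verify from Equation \eqref{eq:artin} that each Artin generator preserves exponent sum, then extend to an arbitrary braid as a product of generators. The abelianization framing (with $\sigma_i$ acting as a coordinate transposition on $\Z^n$) is just a slightly more explicit packaging of the paper's one-line argument.
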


\begin{proof}
    We see from Equation (\ref{eq:artin}) that the action of a generator $\sigma_i$ preserves the exponent sum of a word in $F_n$.
    It follows that for any braid $\beta$, we have that $t(\beta (w))=t(w)$ for $w\in F_n$.
\end{proof}

Taking $K=K_0$ and $\phi$ to be $t$, the following corollary and theorem follow directly from Proposition \ref{prop:inducedcone} and Lemma \ref{lem:betakzero}.

\begin{cor} \label{cor:0-exp}
    A braid $\beta$ preserves a bi-order of $F_n$ if and only if it preserves a bi-order of $K_0$ that is conjugation invariant under elements of $F_n$.
\end{cor}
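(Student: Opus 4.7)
The plan is to apply Proposition \ref{prop:inducedcone} directly, with the short exact sequence
\[
1 \to K_0 \to F_n \xrightarrow{\,t\,} \Z \to 1,
\]
taking $K = K_0$ and $A = \Z$. Since bi-orders of $F_n$ correspond bijectively to conjugation-invariant positive cones of $F_n$ (as explained just after Definition \ref{def:poscone}), and ``preserving a bi-order'' corresponds to preserving the associated positive cone, the conclusion of Proposition \ref{prop:inducedcone} translates immediately into the conclusion of the corollary once the two hypotheses of that proposition are verified.

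First I would verify that $\beta(K_0) = K_0$. This is immediate from Lemma \ref{lem:betakzero}: since $t(\beta(w)) = t(w)$ for every $w \in F_n$, the braid $\beta$ sends zero-exponent-sum words to zero-exponent-sum words, and applying the same reasoning to $\beta^{-1}$ gives equality. Second, I would produce a positive cone on $\Z$ preserved by the induced map $\beta_*$ on $A = \Z$. But Lemma \ref{lem:betakzero} also tells us that $\beta_* : \Z \to \Z$ is the identity, so the standard positive cone $P_A = \{n \in \Z : n > 0\}$ is trivially $\beta_*$-invariant.

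With both hypotheses in place, Proposition \ref{prop:inducedcone} gives the equivalence: $\beta$ preserves a conjugation-invariant positive cone of $F_n$ if and only if $\beta$ preserves a positive cone of $K_0$ which is invariant under conjugation by all of $F_n$. Rephrasing each side in terms of the associated bi-orders yields the statement of Corollary \ref{cor:0-exp}.

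There is no real obstacle here; the content of the corollary is entirely in Proposition \ref{prop:inducedcone} and Lemma \ref{lem:betakzero}, and the only thing to check is that the abelianization target $\Z$ admits a $\beta_*$-invariant positive cone, which is automatic because $\beta_*$ acts trivially on exponent sums. The only minor subtlety worth flagging in the write-up is that the positive cone on $K_0$ produced in the backward direction of Proposition \ref{prop:inducedcone} is required to be invariant under conjugation by the whole of $F_n$ (not merely by $K_0$), which is exactly the extra condition appearing in the statement of the corollary.
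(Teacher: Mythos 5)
Your proposal is correct and follows essentially the same route as the paper: it invokes Proposition \ref{prop:inducedcone} with the exponent-sum sequence $1\to K_0\to F_n\stackrel{t}{\to}\Z\to 1$, using Lemma \ref{lem:betakzero} to get $\beta(K_0)=K_0$ and to see that the induced action on $\Z$ is the identity, hence preserves the standard positive cone. No issues to flag.
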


\begin{proof}
    By Lemma \ref{lem:betakzero}, we have that $t(w)=0$ if and only if $t(\beta(w))=0$ for any word $w\in F_n$.
    Since $K_0=t^{-1}(0)$, we have $\beta(K_0)=K_0$.

    Furthermore, Lemma \ref{lem:betakzero} implies that the $t$-induced action of $\beta$ on $\Z$ is the identity map so $\beta$ preserves a positive cone of $\Z$.
    The corollary now follows from Proposition \ref{prop:inducedcone}.
\end{proof}

\allgenspos*

\begin{proof}
    It suffices to prove the forward direction.
    Suppose $P'$ is a positive cone of $F_n$ preserved by $\beta$, and denote $P_{K_0}:=P'\cap K_0$.
    Then,  $P_{K_0}$ is a positive cone of $K_0$ preserved by $\beta$ and conjugation in $F_n$.
    As in the proof of Proposition \ref{prop:inducedcone}, we can define the positive cone.
    \[
        P:=\{x\in F_n\,|\,t(x)>0, \text{ or } t(x)=0 \text{ with } x\in P_{K_0}\}.
    \]
    By construction, any word in $F_n$ with positive exponent sum is in $P$.
\end{proof}

In theory, this means that we can seed our $k$-precones immediately with all words of positive exponent sum. In practice, our algorithm instead searches for intersections of $k$-precones with $K_0$ which are preserved by $\beta$ and are not only conjugate invariant in $K_0$, but in the larger group $F_n$, as in the following definition. 

\begin{definition}
    A subset $Q$ of $K_0$ is a \emph{$k$-zerocone} of $F_n$ if
    \begin{enumerate}
        \item $(Q\cdot Q)\cap W_k\subset Q$,
        \item $Q\sqcup Q^{-1}\sqcup\{1\}=W_k\cap K_0$, and
        \item $(gQg^{-1})\cap W_k\subset Q$ for all $g\in W_k$.
    \end{enumerate}
\end{definition}

Notice that the conjugation in condition (3) is by all elements of $W_k$, not just the ones in $K_0$.
Now, to relate $k$-zerocones to $k$-precones of $F_n$, we define the set $\mathrm{Pos}_k(Q)$ obtained by adding all positive exponent sum words of length at most $k$ to a $k$-zerocone $Q$ as follows,
    \[
        \mathrm{Pos}_k(Q): = Q\cup \Big(W_k\cap t^{-1}(\Z^+)\Big),
    \]
    where $t:F_n\to\Z$ is the exponent sum map.


\begin{lemma} \label{lem:zeronconesintersections}
\begin{enumerate}[(a)]
    \item If $P$ is a $k$-precone,
    then the intersection $P\cap K_0$ is a $k$-zerocone.
    \item Suppose $Q$ is a $k$-zerocone. The set $\mathrm{Pos}_k(Q)$ is a $k$-precone.
    \item In particular, the set of $k$-zerocones of $K_0$ are precisely the set of intersections of $k$-precones of $F_n$ and $K_0$.
\end{enumerate}
\end{lemma}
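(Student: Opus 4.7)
The plan is to check the three defining conditions of a $k$-precone (resp. $k$-zerocone) directly in each part, using that $K_0$ is a subgroup of $F_n$ (in fact, a normal subgroup, since it is the kernel of the homomorphism $t$), and that for any $g \in F_n$ the length of $g$ equals the length of $g^{-1}$, so $g \in W_k$ implies $g^{-1} \in W_k$.

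For part (a), set $P_0 := P \cap K_0$. Condition (1) follows because if $a,b \in P_0$, then $ab \in K_0$ (as $K_0$ is a subgroup) and $ab \in P \cdot P$, so if also $ab \in W_k$ then $ab \in P$ by condition (1) of a $k$-precone, giving $ab \in P_0$. For condition (2), note that $(P_0)^{-1} = P^{-1} \cap K_0$ since $K_0$ is closed under inversion, and intersecting the partition $W_k = P \sqcup P^{-1} \sqcup \{1\}$ with $K_0$ yields $W_k \cap K_0 = P_0 \sqcup (P_0)^{-1} \sqcup \{1\}$. Condition (3) follows because conjugation preserves $K_0$ (it is normal) and preserves $P$ up to intersection with $W_k$ by condition (3) of a $k$-precone.

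For part (b), write $\mathrm{Pos}_k(Q) = Q \cup Q^+$ where $Q^+ := W_k \cap t^{-1}(\Z^+)$. For condition (1), take $a,b \in \mathrm{Pos}_k(Q)$ with $ab \in W_k$ and do a case split on $t(a), t(b)$: if either is strictly positive then $t(ab) > 0$, so $ab \in Q^+$; if both are zero then $a,b \in Q$ and $ab \in K_0$, so by condition (1) for the $k$-zerocone, $ab \in Q$. For condition (2), decompose $W_k = (W_k \cap K_0) \sqcup Q^+ \sqcup (Q^+)^{-1}$ and apply the zerocone partition $W_k \cap K_0 = Q \sqcup Q^{-1} \sqcup \{1\}$ to conclude $W_k = \mathrm{Pos}_k(Q) \sqcup \mathrm{Pos}_k(Q)^{-1} \sqcup \{1\}$; here we use that inversion swaps $Q^+$ with $(Q^+)^{-1} = W_k \cap t^{-1}(\Z^-)$ and swaps $Q$ with $Q^{-1}$. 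For condition (3), if $a \in Q$ is conjugated into $W_k$, use condition (3) of the zerocone; if $a \in Q^+$, then $t(gag^{-1}) = t(a) > 0$ (as $t$ is a homomorphism) so the conjugate lies in $Q^+$ whenever it lies in $W_k$.

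Part (c) is then immediate: part (a) shows every $P \cap K_0$ is a $k$-zerocone, and part (b) shows every $k$-zerocone $Q$ arises this way, since one easily checks $\mathrm{Pos}_k(Q) \cap K_0 = Q$ (the only elements of $\mathrm{Pos}_k(Q)$ with $t = 0$ are those of $Q$). I do not foresee a serious obstacle; the main care is bookkeeping the case split on $t(a), t(b)$ in condition (1) of part (b) and confirming that the partition in condition (2) is genuinely disjoint, which uses that $t$ distinguishes the three pieces $Q^+, W_k\cap K_0, (Q^+)^{-1}$.
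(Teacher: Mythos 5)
Your proposal is correct and follows essentially the same route as the paper's proof: a direct verification of the three conditions in each part, using that $K_0$ is the (normal) kernel of $t$, the same case split on exponent sums for condition (1) of part (b), the same three-piece partition of $W_k$ by the sign of $t$ for condition (2), and the observation $\mathrm{Pos}_k(Q)\cap K_0=Q$ for part (c). No gaps to flag.
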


\begin{proof}
 \noindent   (a) Suppose $P$ is a $k$-precone of $F_n$.
    We show that $Q=P\cap K_0$ is a $k$-zerocone.

    \textbf{Condition (1):}
    Suppose $a,b\in Q$.
    Then when $ab\in W_k$, we have that $ab\in P$.
    Thus since $K_0$ is a subgroup of $F_n$, $ab\in P\cap K_0=Q$.

    \textbf{Condition (2):} \begin{align*}
        W_k\cap K_0=&\Big(P \sqcup P^{-1} \sqcup \{1\}\Big)\cap K_0 \\
        =&\Big(P\cap K_0\Big) \sqcup \Big(P^{-1}\cap K_0\Big) \sqcup \{1\}\\
        =&Q\sqcup Q^{-1}\sqcup \{1\}
    \end{align*}    

     \textbf{Condition (3):}
    Suppose $g\in W_k$ and $x\in Q$.
    When $gxg^{-1}\in W_k$, we have that $gxg^{-1}\in P$.
    Thus since $K_0$ is normal, $gxg^{-1}\in P\cap K_0=Q$.\\

 \noindent   (b) For the second statement, we show that $\mathrm{Pos}_k(Q)$ is a $k$-precone.
    
    \textbf{Condition (1):}
    Suppose $a,b\in \mathrm{Pos}_k(Q)$.
    If $a$ and $b$ are both in $Q$ then by definition of a $k$-zerocone, when $ab\in W_k$, we have that $ab\in Q\subset \mathrm{Pos}_k(Q)$.
    If either $a$ or $b$ is not in $Q$, then since $Q\subset K_0$, and $K_0=\ker t$, we have $t(ab)>0$.
    Thus, when $ab\in W_k$, we have that $ab\in W_k\cap t^{-1}(\Z^+)\subset \mathrm{Pos}_k(Q)$.

    \textbf{Condition (2):}
    First, we note that
    \[
        \mathrm{Pos}_k(Q)^{-1} = Q^{-1}\cup \Big(W_k\cap t^{-1}(\Z^-)\Big) .
    \]
    For every element $x$ in $W_k$, exactly one of $t(x)>0$, $t(x)<0$, or $x\in K_0$ is true.
    Thus, we have that
    \begin{align*}
        W_k=&\Big(W_k\cap t^{-1}(\Z^+)\Big)\sqcup \Big(W_k\cap K_0\Big) \sqcup \Big(W_k\cap t^{-1}(\Z^-)\Big) \\
        =&\Big(W_k\cap t^{-1}(\Z^+)\Big)\sqcup Q\sqcup\{1\}\sqcup Q^{-1} \sqcup \Big(W_k\cap t^{-1}(\Z^-)\Big)\\
        =&\mathrm{Pos}_k(Q)\sqcup \{1\} \sqcup \mathrm{Pos}_k(Q)^{-1}
    \end{align*}    
    
     \textbf{Condition (3):}
    Suppose $g\in W_k$ and $x\in \mathrm{Pos}_k(Q)$.
    If $x\in Q$ then when by definition of a $k$-zerocone, when $gxg^{-1}\in W_k$, we have that $gxg^{-1}\in Q\subset \mathrm{Pos}_k(Q)$.
    If $x\in W_k\cap t^{-1}(\Z^+)$ then $t(gxg^{-1})=t(x)>0$.
    Thus, when $gxg^{-1}\in W_k$, we have that $gxg^{-1}\in W_k\cap t^{-1}(\Z^+)\subset \mathrm{Pos}_k(Q)$.\\    

    \noindent (c) For the final statement, we have already shown that an intersection of a $k$-precone and $K_0$ is a $k$-zerocone by Part (a).
    For the other inclusion, we have that any $k$-zerocone $Q$ is the intersection $\mathrm{Pos}_k(Q)\cap K_0$ by definition of  $\mathrm{Pos}_k(Q)$.
\end{proof}

\begin{defn}
We say a $k$-zerocone $Q_k$ of $K_0$ is \emph{preserved by} an automorphism $\varphi$ of $F_n$ if $\varphi(Q_k)\cap W_k\subset Q_k$.
\end{defn}

\begin{lemma} \label{lem:preconestozerocones}
    For each $k\in\N$, the braid $\beta$ preserves a $k$-precone of $F_n$  if and only if $\beta$ preserves a $k$-zerocone of $K_0$.
\end{lemma}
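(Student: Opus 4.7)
The plan is to deduce this lemma directly from Lemma \ref{lem:zeronconesintersections}, which already establishes the bijective correspondence between $k$-precones of $F_n$ and $k$-zerocones of $K_0$ via the operations $P \mapsto P \cap K_0$ and $Q \mapsto \mathrm{Pos}_k(Q)$. The only additional content needed is that each direction of the correspondence respects the action of $\beta$. Throughout, the key structural fact is Lemma \ref{lem:betakzero}: $t \circ \beta = t$, and hence $\beta(K_0) = K_0$.

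For the forward direction, I would start with a $k$-precone $P$ preserved by $\beta$ and set $Q := P \cap K_0$. Lemma \ref{lem:zeronconesintersections}(a) gives that $Q$ is a $k$-zerocone, so the only thing to verify is preservation. Given $x \in Q$ with $\beta(x) \in W_k$, we have $\beta(x) \in \beta(P) \cap W_k \subset P$ by hypothesis, and $\beta(x) \in \beta(K_0) = K_0$ by Lemma \ref{lem:betakzero}, hence $\beta(x) \in P \cap K_0 = Q$.

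For the backward direction, I would start with a $k$-zerocone $Q$ preserved by $\beta$ and set $P := \mathrm{Pos}_k(Q)$. By Lemma \ref{lem:zeronconesintersections}(b), $P$ is a $k$-precone. To check preservation, take $x \in P$ with $\beta(x) \in W_k$, and split into cases using the definition $P = Q \cup (W_k \cap t^{-1}(\Z^+))$. If $x \in Q$, then $\beta(x) \in K_0$ by Lemma \ref{lem:betakzero}, and combined with $\beta(x) \in W_k$ and the assumption that $\beta$ preserves $Q$, we get $\beta(x) \in Q \subset P$. If instead $t(x) > 0$, then $t(\beta(x)) = t(x) > 0$ by Lemma \ref{lem:betakzero}, so $\beta(x) \in W_k \cap t^{-1}(\Z^+) \subset P$.

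There is no real obstacle here; the lemma is essentially bookkeeping that combines the correspondence of Lemma \ref{lem:zeronconesintersections} with the exponent-sum invariance of Lemma \ref{lem:betakzero}. The only minor subtlety worth stating carefully is that the definition of ``preserved'' involves intersecting with $W_k$ (since $\beta$ can send short words to long ones), so in both directions one must keep track of the $\beta(x) \in W_k$ hypothesis before concluding membership in the target precone/zerocone.
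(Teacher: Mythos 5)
Your proposal is correct and follows essentially the same route as the paper: both directions use Lemma \ref{lem:zeronconesintersections} for the precone/zerocone structure and the exponent-sum invariance of Lemma \ref{lem:betakzero} to check $\beta$-preservation, with your elementwise case analysis being just a rephrasing of the paper's set-level computations such as $\beta(Q_k)\cap W_k = W_k\cap\beta(P_k)\cap K_0\subset P_k\cap K_0=Q_k$.
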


\begin{proof}
    Suppose $\beta$ preserves a $k$-precone $P_k$ of $F_n$.
    Then, $Q_k=P_k\cap K_0$ is a $k$-zerocone by Lemma \ref{lem:zeronconesintersections}.
    Since $\beta$ is a bijection, then $\beta(Q_k)=\beta(P_k\cap K_0)=\beta(P_k)\cap\beta(K_0)=\beta(P_k)\cap K_0$. Using the fact that $\beta$ preserves $P_k$, we have $\beta(P_k)\cap W_k\subset P_k$, and so
    \[
        \beta(Q_k)\cap W_k=W_k\cap \beta(P_k)\cap K_0 \subset P_k \cap K_0=Q_k.
    \]

    Conversely, suppose $\beta$ preserves a $k$-zerocone $Q_k$ of $K_0$ so $\beta(Q_k)\cap W_k\subset Q_k$.
    Define the $k$-precone $P_k$ as in Lemma \ref{lem:zeronconesintersections} as follows.
    \[
        P_k = Q_k\cup \Big(W_k\cap t^{-1}(\Z^+)\Big)
    \]
        
    Since $\beta$ is bijective and doesn't affect exponent sum,
    \begin{align*}
        \beta(P_k)\cap W_k =& \Big[\beta(Q_k)\cup \Big(\beta(W_k)\cap t^{-1}(\Z^+)\Big)\Big]\cap W_k\\
        =& \Big(\beta(Q_k)\cap W_k\Big)\cup \Big(\beta(W_k)\cap t^{-1}(\Z^+)\cap W_k\Big) \\
        \subset & Q_k \cup \Big(t^{-1}(\Z^+)\cap W_k\Big) = P_k.
    \end{align*}    
\end{proof}

Combining Proposition \ref{prop:whyalgworks} and Lemma \ref{lem:preconestozerocones}, we can detect non-order-preserving braids by obstructing $k$-zerocones of $K_0$ by the following proposition.

\begin{prop}
     \label{prop:whyzeroalgworks}
     The braid $\beta$ is not order-preserving if and only if for some integer $k$ the braid $\beta$ does not preserve any $k$-zerocone of $K_0$.
\end{prop}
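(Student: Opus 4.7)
The plan is to derive Proposition \ref{prop:whyzeroalgworks} as a direct logical consequence of the two preceding results, namely Proposition \ref{prop:whyalgworks} and Lemma \ref{lem:preconestozerocones}, by taking contrapositives and chaining the equivalences. No new mathematical content beyond these two statements should be needed.

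First, I would recall that Proposition \ref{prop:whyalgworks} states that $\beta$ is order-preserving if and only if $\beta$ preserves a $k$-precone of $F_n$ for every positive integer $k$. Taking the contrapositive, $\beta$ is \emph{not} order-preserving if and only if there exists some positive integer $k$ such that $\beta$ preserves no $k$-precone of $F_n$. This is the ``$k$-precone'' version of what we want.

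Next, I would invoke Lemma \ref{lem:preconestozerocones}, which provides a bijection (or at least an equivalence of existence) at each fixed $k$: for any given $k$, the braid $\beta$ preserves some $k$-precone of $F_n$ if and only if $\beta$ preserves some $k$-zerocone of $K_0$. Therefore the negations are also equivalent at each $k$: $\beta$ preserves no $k$-precone of $F_n$ if and only if $\beta$ preserves no $k$-zerocone of $K_0$. Substituting this equivalence into the conclusion from the previous paragraph immediately yields the proposition.

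There is no real obstacle here; the only thing to be careful about is the quantifier structure when swapping the ``for every $k$'' to a ``for some $k$'' under negation, and making sure the equivalence from Lemma \ref{lem:preconestozerocones} is applied pointwise in $k$ before the existential quantifier is introduced. The proof should therefore consist of essentially one short paragraph citing both results and stringing the equivalences together.
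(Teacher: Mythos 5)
Your proof is correct and matches the paper's own treatment: the paper gives no separate argument, deriving Proposition \ref{prop:whyzeroalgworks} exactly as you do, by combining Proposition \ref{prop:whyalgworks} with Lemma \ref{lem:preconestozerocones}. Your attention to the quantifier order (applying the lemma pointwise in $k$ before negating) is the right care to take.
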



\section{Algorithms} \label{sec:alg}

 Calegari and Dunfield described a theoretical algorithm for obstructing the left-orderability of a group \cite[Section 8]{CD03}.
For a finitely presented  group $G$ with a solution to the word problem, their algorithm produces an obstruction to left-orderability in finite time when $G$ is not left-orderable group.
When $G$ is left-orderable, their algorithm does not halt.
Taking inspiration from Calegari and Dunfield's work, we describe and implement an algorithm to answer the following question.
\begin{question} \label{Quest:PresPreCone}
    Suppose $\beta$ is an $n$-strand braid, and let $k$ be a positive integer.
    Does $\beta$ preserve a $k$-precone of $F_n$ in the sense of Definition \ref{def:preconepreserve}?
\end{question}
By Proposition \ref{prop:whyalgworks}, a braid $\beta$ is order-preserving if and only if the answer to Question \ref{Quest:PresPreCone} is ``yes'' for every positive integer $k$. 
The recursive algorithm PreservePreCone($\beta$,$\{x_1\}$,$k$) defined in Algorithm \ref{alg:basic} returns True or False when the answer to Question \ref{Quest:PresPreCone} is ``yes'' or ``no'' respectively.
Note that since we are working with the free group, we can solve the word problem by greedy reduction.

Recall that $W_k$ is the set of words in $F_n=\langle x_1,\dots,x_n\rangle$ with word length $k$ or less, as defined in Section \ref{sec:precones}.
Given a braid $\beta$, a positive integer $k$, and a subset $P\subset W_k$, Algorithm \ref{alg:basic} attempts to add elements to a set $P$ until it is a $k$-precone preserved by $\beta$.

To satisfy the definition of $k$-precone preserved by $\beta$, we add the following elements to $P$:
\begin{itemize}
    \item products of elements in $P$,
    \item conjugates of elements in $P$ by the free group generators, and
    \item images under $\beta$ and $\beta^{-1}$ of elements in $P$.
\end{itemize}
We will denote this saturation operation by $S_{\beta}(P)$ which is explicitly defined as follows:
\[
    S_{\beta}(P):=P\cup(P\cdot P)\cup \Big(\bigcup_{i=1}^n\{x_iPx_i^{-1}\}\Big)\cup \beta(P)\cup \beta^{-1}(P) .
\]
Since we are restricted to words in $W_k$, we recursively define $P=S_{\beta}(P)\cap W_k$.
The algorithm repeatedly applies this operation until $P$ is the smallest set containing $P$ which is closed, after restricting to elements in $W_k$, under products, conjugation by elements of $F_n$, and the actions of $\beta$ and $\beta^{-1}$.

If the identity is in $P$, the algorithm returns false.
If all non-trivial words of $W_k$ are in $P$ or $P^{-1}$, then the algorithm returns true.
If neither of these cases are satisfied, we recursively apply Algorithm \ref{alg:basic} as follows.
Order the non-trivial elements of $W_k$ by word length, shortest to longest, using some choice of tiebreaker for elements of the same length.
Let $\alpha$ be the shortest non-trivial element in $W_k$ which is not in $P$ or $P^{-1}$.
Algorithm \ref{alg:basic} returns true if applying the algorithm to either $P$ with $\alpha$ add or $P$ with $\alpha^{-1}$ added returns true.
Otherwise, Algorithm \ref{alg:basic} returns false.

\begin{prettyalg}{PreservePreCone($\beta, P, k$)}{alg:basic}

    \begin{algorithmic}
\While{$S_{\beta}(P)\cap W_k\not\subset P$} 
    \State $P:=S_{\beta}(P)\cap W_k$
\EndWhile
\If{$1\in P$}
    \State $\rt$False
\EndIf
\If{$P\cup P^{-1}\cup\{1\}=W_k$}
    \State $\rt$True
\EndIf
\State $\alpha:=$ shortest word in $W_k-(P\cup P^{-1}\cup\{1\})$
\State $\rt$PreservePreCone($\beta, P\cup\{\alpha\}, k$) \textbf{or} PreservePreCone($\beta, P\cup\{\alpha^{-1}\}, k$)
\end{algorithmic}

\end{prettyalg}

Algorithm \ref{alg:basic} will return true if any of the recursive calls of Algorithm \ref{alg:basic} returns true.
When Algorithm \ref{alg:basic} returns false, there is binary tree of recursive applications of Algorithm \ref{alg:basic}.
The starting node is the application of Algorithm \ref{alg:basic} to the original set $P$.
The leaves of the tree correspond to applications of Algorithm \ref{alg:basic} where a contradiction was found.
This binary tree is a certificate of the non-order-preservingness of $\beta$.

When executing Algorithm \ref{alg:basic}, every non-trivial element of $W_k$ must be placed in $P$ or $P^{-1}$ at least once either by the function $S_{\beta}$ or during the recursive branching step.
Since the number of words in $F$ with length $k$ is $6\cdot 5^{k-1}$, the time complexity of Algorithm \ref{alg:basic} is at least exponential in $k$.
When implemented, this algorithm does not complete in a reasonable time for $k > 6$.

When $\beta$ maps short words to significantly longer words, it is easy to find a $k$-precone $P_k$ of the free group where $\beta(P_k)\cap W_k$ is small.
(For example, the braid $\sigma_1^3\sigma_2^{-3}\sigma_1$ maps $x_2$ to a word of length 21, and you wouldn't see much of the braid action in $W_k$ until $k=21$ or higher.)
In this case, when $\beta$ is non-order-preserving, $k$ must be large for the answer to Question \ref{Quest:PresPreCone} to be ``no".
This means that in practice, Algorithm \ref{alg:basic} is not so practical for obstructing order-preservingness of most braids.

In our implementation, we make several modifications to improve the effectiveness of obstructing order-preservingness.
First, instead of using the action of $\beta$ and $\beta^{-1}$, we use automorphisms $b=\psi_1\circ\beta$ and $b'=\psi_2\circ\beta^{-1}$ where $\psi_1$ and $\psi_2$ are inner automorphisms.
The automorphisms $\psi_1$ and $\psi_2$ are chosen to minimize the longest possible length of the images $b(w)$ and $b'(w)$.
Lemma \ref{lem:inner_conj} describes that composition with an inner automorphism does not change a preserved positive cone. So the choices of $\psi_1$ and $\psi_2$ only help us to find a contradiction sooner (for smaller $k$) by changing the order in which elements are added to a precone.

\begin{lem}\label{lem:inner_conj}
    A positive cone $P$ is preserved by $\beta$ if and only if $P$ is preserved by $\phi\circ\beta$ for $\phi$ an inner automorphism of $F_n$.
\end{lem}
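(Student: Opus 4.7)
The plan is to use the conjugation invariance of the positive cone, which is built into Definition \ref{def:poscone} condition (3). An inner automorphism $\phi$ of $F_n$ is, by definition, conjugation by some element $g \in F_n$, so $\phi(x) = gxg^{-1}$ for all $x \in F_n$. Since $P$ is a positive cone, condition (3) gives $\phi(P) = gPg^{-1} = P$, and similarly $\phi^{-1}(P) = g^{-1}Pg = P$.

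First I would prove the forward direction. Assume $\beta$ preserves $P$, so $\beta(P) = P$. Then
\[
    (\phi\circ\beta)(P) = \phi(\beta(P)) = \phi(P) = P,
\]
so $\phi\circ\beta$ preserves $P$. For the converse, assume $\phi\circ\beta$ preserves $P$, so $\phi(\beta(P)) = P$. Apply $\phi^{-1}$ to both sides to obtain $\beta(P) = \phi^{-1}(P) = P$, so $\beta$ preserves $P$.

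The lemma is essentially immediate from the defining properties of a positive cone, so there is no real obstacle; the main content is just observing that the conjugation invariance condition (3) of Definition \ref{def:poscone} makes positive cones insensitive to composition with inner automorphisms. I would keep the proof to two or three short lines.
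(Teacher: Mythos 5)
Your proof is correct and follows the same route as the paper: condition (3) of Definition \ref{def:poscone} gives $\phi(P)=P$ for any inner automorphism $\phi$, from which both directions follow immediately. Nothing further is needed.
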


\begin{proof}
    Suppose $P$ is a positive cone.
    Since $P$ is conjugate invariant, any inner automorphism will preserve $P$.
    Thus, for any braid $\beta$ and inner automorphism $\phi$, we have that $\beta(P)=P$ if and only if $\phi(\beta(P))=P$.
\end{proof}

Second, in light of Corollary \ref{cor:0-exp}, we only add words with exponent sum zero to our prospective $k$-precone.
While this doesn't change the time complexity of the algorithm, it significantly reduces the number of words we need to consider.
To do this, instead of seeding our prospective precone with shortest elements in $W_k$, we seed with words in $Z_k$, the subset of words in $W_k$ with zero exponent sum.

Finally, instead of restricting ourselves to working with words at most length $k$, we allow our algorithm to ``remember" words of longer length without using these extra elements in the computation to $S_{\beta}(P)$.
This means for a given $k$ our algorithm will find contradictions for preserving larger precones without having to perform extra computations.

After these modifications we get Algorithm \ref{alg:fancy}.

\begin{prettyalg}{ModPreservePreCone($\beta, P, E, k)$}{alg:fancy}
\begin{algorithmic}
\While{$S(P)\cap Z_k\not\subset P$}
    \State $P_*:=S_{\beta}(P)\cap Z_k$
    \State $E:=E\cup(S_{\beta}(P)-P_*)$  \Comment{Tracking elements in cone with length greater than $k$}
\EndWhile
\If{$1\in P_*\cup E$}
    \State $\rt$False
\EndIf
\If{$P_*\cup P_*^{-1}\cup\{1\}=Z_k$}
    \State $\rt$True
\EndIf
\State $\alpha:=$ shortest word in $Z_k-(P_*\cup P_*^{-1}\cup\{1\})$
\State $\rt$ModPreservePreCone($\beta,P\cup\{\alpha\}, E, k$) \textbf{or} ModPreservePreCone($\beta, P\cup\{\alpha^{-1}\}, E, k$)
\end{algorithmic}
\end{prettyalg}

By Remark \ref{rem:favorite_elt}, we can always start with the assumption that $x_1<x_2$, which is the same as seeding Algorithm \ref{alg:fancy} with the set $P=\{x_1^{-1}x_2\}$.
If ModPreservePreCone($\beta,\{x_1^{-1}x_2\},\emptyset, k$) returns true,  if the answer to Question \ref{Quest:PresPreCone} is ``yes".
When ModPreservePreCone($\beta,\{x_1^{-1}x_2\},\emptyset, k$) returns false, we can't conclude that answer to Question \ref{Quest:PresPreCone} is ``no" for $k$, but we can conclude that answer to Question \ref{Quest:PresPreCone} is ``no" for some positive integer by Proposition \ref{prop:whyzeroalgworks}.
 More importantly, the same proposition implies that when the algorithm ModPreservePreCone($\beta,\{x_1^{-1}x_2\},\emptyset, k$) returns false, the braid $\beta$ is not order-preserving.

To achieve the final algorithm, described in the introduction as Algorithm \ref{The_alg}, one simply needs to call ModPreservePreCone($\beta,\{x_1^{-1}x_2\},\emptyset,k$) iteratively with increasing values of $k$ until ModPreservePreCone($\beta,\{x_1^{-1}x_2\},\emptyset,k$) returns false. 

\begin{prettyalg}{input braid $\beta$}{alg:incrementk}
\begin{algorithmic}
\State $k=1$
\State FoundPrecone=True
\While{FoundPrecone is True}
    \If{ModPreservePreCone($\beta,\{x_1^{-1}x_2\},\emptyset,k$) is False}
    \State FoundPrecone=False
    
    \Else{ increase $k$ by 1}
    \EndIf
\EndWhile
\State $\rt$ False
\end{algorithmic}
\end{prettyalg}

As can be seen in Algorithm \ref{alg:incrementk}, if ModPreservePreCone($\beta,\{x_1^{-1}x_2\},\emptyset, k$) never returns false, the algorithm will not terminate. 
However, by Propositions \ref{prop:whyalgworks} and \ref{prop:whyzeroalgworks}, if $\beta$ is not-order-preserving, then for some $k$, ModPreservePreCone($\beta,\{x_1^{-1}x_2\},\emptyset,k$) will return false and the program will terminate in finite time.

\subsection{Implementation}\label{sec:Ex_Outs}
 Algorithms \ref{alg:basic}, \ref{alg:fancy} and \ref{alg:incrementk} are implemented in SageMath and Python. The code for these algorithms is available on Github \cite{JST23}.
 As a practical note, in a work occurring somewhat concurrently to this article, Cai-Clay-Rolfsen developed a sufficient condition to identify order-preserving braids \cite{CCR-Ordered-bases}. 
While Algorithm \ref{alg:incrementk} does not take this condition into account, we implemented this condition as a separate function. So in practice, one should first check if the braid in question satisfies their condition before indefinitely running Algorithm \ref{alg:incrementk}.

 When the implementation of Algorithm \ref{alg:fancy} returns false, it also returns a proof that the braid is not order-preserving. 
 The proof output is a binary tree recording attempts to build precones; see Figure \ref{fig:binaryTree} for a visual representation of this tree for $\s_{1}\s_{2}^{-3}$. The first node is the seeding element of the prospective precone. Each parent node has two child nodes: the first corresponding to an attempt to add a new element $\alpha$ to the precone and the second to adding $\alpha^{-1}$. If the attempt was successful, there will be no proof information and the branching process will continue. If the attempt was unsuccessful, the proof info will output two elements of the attempted precone that are inverses, as well as instructions for how the elements were added to the precone.
 The algorithm proved that $\sigma_1\sigma_2^{-3}$ is not order-preserving, which is a new result.

\begin{prop}
     The braid $\sigma_1\sigma_2^{-3}$ is not order-preserving.
\end{prop}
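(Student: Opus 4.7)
The plan is to package the certificate produced by Algorithm \ref{alg:fancy} as a hand-written contradiction argument. By Proposition \ref{prop:whyzeroalgworks}, it is enough to exhibit a positive integer $k$ such that $\beta = \sigma_1 \sigma_2^{-3}$ preserves no $k$-zerocone of $K_0 \leq F_3$. The implementation reports success at $k = 4$, so I would aim to derive the contradiction inside $W_4 \cap K_0$.

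First, I would assume for contradiction that $\beta$ preserves some positive cone $P \subset F_3$. By Remark \ref{rem:favorite_elt}, I may normalize so that $x_1^{-1} x_2 \in P$; by Theorem \ref{thm:allgenspos}, I may further assume every word in $F_3$ with positive exponent sum lies in $P$. To keep the images of short words short, I would invoke Lemma \ref{lem:inner_conj} to replace $\beta$ by $\psi \circ \beta$ for an appropriate inner automorphism $\psi$, since this substitution leaves the preserved cone unchanged. From here the task is to analyze the candidate zero-exponent-sum subset $Q := P \cap K_0$, which by Lemma \ref{lem:zeronconesintersections} must be a $4$-zerocone preserved by (the adjusted) $\beta$.

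Next, I would iteratively saturate $Q$ using the three operations dictated by the definition of a preserved zerocone: multiplication of two elements (when the product has length at most $4$), conjugation by any generator $x_i^{\pm 1}$, and the action of $\beta$ and $\beta^{-1}$ on elements of $Q$. Starting from the seed $\{x_1^{-1} x_2\}$, each new forced word is recorded together with a short justification naming the operation and its parents. When the saturation stabilizes without yet partitioning $W_4 \cap K_0$ into $Q \sqcup Q^{-1} \sqcup \{1\}$, I select the shortest still-undecided zero-sum word $\alpha$ and branch into the two cases $\alpha \in Q$ and $\alpha^{-1} \in Q$, re-saturating in each. The outcome is a rooted binary tree of the kind shown in Figure \ref{fig:binaryTree}.

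The main obstacle is purely bookkeeping: each leaf of the tree must exhibit a specific word $w \in W_4 \cap K_0$ together with two independent derivations forcing $w \in Q$ and $w^{-1} \in Q$, contradicting condition (2) of the zerocone definition. To keep the write-up human-readable I would precompute a small table of the images $\beta(x_i^{\pm 1} x_j^{\pm 1})$ and their relevant inverse images up through length $4$, display the branching tree, and for each leaf list only the one or two saturation steps that produce the offending pair. No individual step is hard, but choosing the branching word $\alpha$ in the right order so that the tree stays shallow, and aligning that choice with the certificate emitted by the implementation, is the delicate part of the presentation.
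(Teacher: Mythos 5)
Your strategy is the same as the paper's (turn the algorithm's certificate into a two-case contradiction argument seeded with $x_1^{-1}x_2$), but as written your proposal stops exactly where the actual proof begins: it never names the branching word, never exhibits the contradiction elements, and never verifies a single identity under the action of $\sigma_1\sigma_2^{-3}$. The entire mathematical content of this proposition is that explicit data. The paper's proof branches on $\alpha=x_2^{-1}x_3$ and, in each of the two cases, produces one concrete word together with two derivations placing both it and its inverse in the prospective cone: if $\alpha\in P$, then $\mu=x_3^{-1}x_2^{-1}x_1x_2$ satisfies $\mu=\beta\bigl(x_3^{-1}\alpha x_3\bigr)$ and $\mu^{-1}=\bigl(x_2^{-1}(x_1^{-1}x_2)x_2\bigr)\alpha$, so $\mu,\mu^{-1}\in P$; if $\alpha^{-1}\in P$, a similar (slightly longer) derivation forces $\nu=x_2^{-2}x_1x_2x_3x_2^{-1}$ and $\nu^{-1}$ into $P$. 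Deferring all of this as ``purely bookkeeping'' leaves a genuine gap: without computing the action of $\beta$ on the relevant short words and checking these identities (including that a single depth-one branch on $x_2^{-1}x_3$ suffices), you have a plan for a proof, not a proof, and there is no a priori guarantee the tree is as shallow as you hope until the computation is actually displayed.

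A secondary point: the finite-truncation machinery you invoke ($4$-zerocones, Proposition \ref{prop:whyzeroalgworks}, Lemma \ref{lem:zeronconesintersections}) is how the \emph{algorithm} finds the certificate, but it is not needed in the written argument and in fact makes the target harder. To rule out every $4$-zerocone you would have to carry the branching over all undecided words of $W_4\cap K_0$, whereas the paper simply assumes a full positive cone $P$ of $F_3$ preserved by $\beta$ exists, normalizes $x_1^{-1}x_2\in P$ by Remark \ref{rem:favorite_elt}, and derives the contradiction directly in $P$ using products, conjugation invariance, and $\beta$-invariance. You should restructure your write-up the same way: state the two cases, give the explicit elements $\mu$ and $\nu$, and verify the handful of equalities; nothing about truncations or $k$ needs to appear.
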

\begin{proof}

Figure \ref{fig:binaryTree} is a visual representation of the following argument. As stated in Remark \ref{rem:favorite_elt}, we may assume $x_1^{-1}x_2$ is an element of $P$ (this is the seeding element). Let $\alpha=x_2^{-1}x_3$. Either $\alpha$ or $\alpha^{-1}$ is in $P$.
If $\alpha\in P$ (one node element) then the element $x_3^{-1}x_2^{-1}x_1x_2$ and its inverse (contradiction elements) are both in $P$, which is a contradiction.
If $\alpha^{-1}=(x_2^{-1}x_3)^{-1}\in P$ (the other node element) then the element $x_2^{-2}x_1x_2x_3x_2^{-1}$ and its inverse are both in $P$, which is a contradiction.
Thus no such $P$ can exists and $\sigma_1\sigma_2^{-3}$ is not order-preserving.
\end{proof}

\begin{figure}
\begin{center}
\begin{tikzpicture}[node distance=5cm]

\node (start) [rectangle, text width=4cm, draw=black, align=center]{Prospective positive cone seeded with $y=x_1^{-1}x_2$};

\node (case1) [rectangle, below left of=start, text width=5cm, draw=black, align=center]{Contradiction for \\ $\mu=x_3^{-1}x_2^{-1}x_1x_2$: Both $\mu$ and $\mu^{-1}$ in prospective cone.\\ ~ \\ $\mu=\beta (\alpha^{x_3^{-1}}) $ \\
$\mu^{-1}=y^{x_2^{-1}} \alpha$};

\node (case2) [rectangle, text width=5cm, below right of=start, draw=black, align=center]{Contradiction for $\nu=x_2^{-2}x_1x_2x_3x_2^{-1}$: Both $\nu$ and $\nu^{-1}$ in prospective cone.\\ ~ \\ $\nu=\beta((y^{x_3}  \beta(\alpha^{-1}))^{x_2})^{x_2} $ \\
$\nu^{-1}=(\alpha^{-1})^{x_2} y^{x_2^{-1}}$};

\draw (start) -- (case1) node[midway, left, text width = 3.5cm, align = center]{Adding $\alpha=x_2^{-1}x_3$ to cone};
\draw (start) -- (case2) node[midway, right, text width = 3.9cm, align = center]{Adding $\alpha^{-1}=(x_2^{-1}x_3)^{-1}$ \\ to cone};
\end{tikzpicture}
\end{center}
\caption{This tree depicts how Algorithm 30 obstructs order-preservingness of $\sigma_1\sigma_2^{-3}$. For conjugation, we use the notation $g^h:=hgh^{-1}$ for two elements $g,h\in F_n$.}\label{fig:binaryTree}
\end{figure}
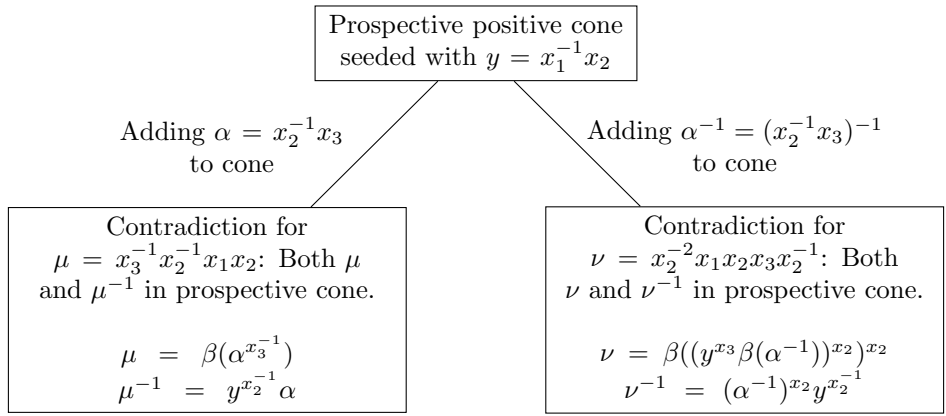     

\section{A Family of Non-Order-Preserving Braids}\label{sec:Family}

We prove that the braids $\sigma_1\sigma_2^{2m+1}$ are not order-preserving.
Our proof is inspired by the computer generated proof resulting from the implemented Algorithm \ref{alg:fancy} applied to the braid $\sigma_1\sigma_2^{-3}$, as discussed in Section \ref{sec:Ex_Outs} and visualized in Figure \ref{fig:binaryTree}.

\braidfam*

\begin{proof}
    

    Let $\psi\in\inn(F_3)$ be conjugation by $w^{-m}$ where $w=x_2^{-1}x_1x_2x_3$,
    let $f$ be the automorphism $\psi\circ\beta$ in $\aut(F_3)$ which is defined by the following action.
    \[
        x_1\longmapsto w^{-m} x_2 w^m \hspace{0.5cm} x_2 \longmapsto x_2^{-1}x_1x_2x_3x_2^{-1}x_1^{-1}x_2
    \]
    \[
        x_3 \longmapsto x_2^{-1}x_1x_2
    \]
    The automorphism $f$ and the braid $\beta$ preserve the same bi-orders of $F_3$ by Lemma \ref{lem:inner_conj}. 

    Suppose $P$ is a positive cone of $F_3$ preserved by $f$.
    We may assume without loss of generality that $x_1^{-1}x_2\in P$.

    Now, either $x_2^{-1}x_3$ or $x_3^{-1}x_2$ must be in $P$. Suppose first that $x_2^{-1}x_3\in P$. Then 
    \[
        f(x_2^{-1}x_3)=x_2^{-1}x_1x_2x_3^{-1}\in P .
    \]
    Additionally, we have that $x_3x_2^{-1}$ is in $P$ by conjugating $x_2^{-1}x_3$.  However, since $x_1^{-1}x_2$ is also in $P$, we have that $x_1^{-1}x_2f(x_2^{-1}x_3)x_3x_2^{-1}=1\in P$ which is a contradiction.
    
    On the other hand, suppose that that $x_3^{-1}x_2\in P$.
    Then, since $x_1^{-1}x_2\in P$,
    \[
        x_3(x_1^{-1}x_2)x_3^{-1}\cdot f(x_3^{-1}x_2)=x_3x_1^{-2}x_2  \in P .
    \]
    Since,
    \begin{align*}
        f(x_3x_1^{-2}x_2)= & x_2^{-1}x_1x_2 w^{-m} x_2^{-2} w^m x_2^{-1}x_1x_2x_3x_2^{-1}x_1^{-1}x_2 \\
        = & x_2^{-1}x_1x_2 w^{-m} x_2^{-2} w^{m+1} x_2^{-1}x_1^{-1}x_2 ,
    \end{align*}
    we have that
    \[
        x_2^{-2} w \in P
    \]
    after conjugating by $x_2^{-1}x_1x_2 w^{-m}$.
        
    However, since $x_1^{-1}x_2$ and $x_3^{-1}x_2$ are in $P$, 
    the elements  $x_2^{-1}x_1$ and $x_3x_2^{-1}$ are in $P^{-1}$.
    Thus,

    \[
        x_2^{-1}\Big[x_2^{-1}(x_2^{-1}x_1)x_2\cdot x_3x_2^{-1}\Big]x_2= x_2^{-2} w \in P^{-1}
    \]
    which is a contradiction.
\end{proof}

\printbibliography

@book{CR16,
 ISBN = {9781470431068},
 author = {Adam Clay and Dale Rolfsen},
 publisher = {American Mathematical Society},
 title = {Ordered Groups and Topology (GSM-176)},
 address = {Providence, Rhode Island},
 year = {2016}
}

@article {CD03,
    AUTHOR = {Calegari, Danny and Dunfield, Nathan M.},
     TITLE = {Laminations and groups of homeomorphisms of the circle},
   JOURNAL = {Invent. Math.},
  FJOURNAL = {Inventiones Mathematicae},
    VOLUME = {152},
      YEAR = {2003},
    NUMBER = {1},
     PAGES = {149--204},
      ISSN = {0020-9910,1432-1297},
   MRCLASS = {57M50 (37C85 57N10 57R30 57S05)},
  MRNUMBER = {1965363},
MRREVIEWER = {Charles\ Ira\ Delman},
       DOI = {10.1007/s00222-002-0271-6},
       URL = {https://doi.org/10.1007/s00222-002-0271-6},
}

@article{ND20,
author = {Nathan M Dunfield},
title = {{Floer homology, group orderability, and taut foliations of hyperbolic $3$–manifolds}},
volume = {24},
journal = {Geometry \& Topology},
number = {4},
publisher = {MSP},
pages = {2075 -- 2125},
keywords = {Floer homology, hyperbolic $3$–manifolds, Orderable groups, taut foliations},
year = {2020},
doi = {10.2140/gt.2020.24.2075},
URL = {https://doi.org/10.2140/gt.2020.24.2075}
}

@article{JJ2,
   title={Residual torsion-free nilpotence, bi-orderability, and two-bridge links}, volume={76}, DOI={10.4153/S0008414X2300007X}, number={2}, journal={Canadian Journal of Mathematics}, author={Johnson, Jonathan}, year={2024}, pages={394–457}
}

@article {JJ1,
      title={Residual torsion-free nilpotence, biorderability and pretzel knots}, 
      author={Jonathan Johnson},
      ljournal={Algebraic and Geometric Topology},
      journal={Algebr. Geom. Topol.},
      year={2023},
      volume={23},
      number={4},
      pages = {1787-1830}

}

@software{JST23,
author = {Johnson, Jonathan and Scherich, Nancy and Turner, Hannah},
license = {GPL-3.0},
month = oct,
title = {{order-preserving-braids}},
url = {https://github.com/jjohnson524/order_preserving_braids},
version = {1.0},
year = {2023}
}

@misc{CCR-Ordered-bases,
      title={Ordered bases, order-preserving automorphisms and bi-orderable link groups}, 
      author={Tommy Wuxing Cai and Adam Clay and Dale Rolfsen},
      year={2024},
      eprint={2406.18876},
      archivePrefix={arXiv},
      primaryClass={math.GR},
      url={https://arxiv.org/abs/2406.18876}, 
}

@article {KR-BraidsOrderings,
    AUTHOR = {Kin, Eiko and Rolfsen, Dale},
     TITLE = {Braids, orderings, and minimal volume cusped hyperbolic
              3-manifolds},
   JOURNAL = {Groups Geom. Dyn.},
  FJOURNAL = {Groups, Geometry, and Dynamics},
    VOLUME = {12},
      YEAR = {2018},
    NUMBER = {3},
     PAGES = {961--1004},
      ISSN = {1661-7207},
   MRCLASS = {57M07 (20F60 57M27)},
  MRNUMBER = {3845714},
MRREVIEWER = {Thomas Koberda},
       DOI = {10.4171/GGD/460},
       URL = {https://doi.org/10.4171/GGD/460},
}

@book {Mur-3braids,
    AUTHOR = {Murasugi, Kunio},
     TITLE = {On closed {$3$}-braids},
      NOTE = {Memoirs of the American Mathmatical Society, No. 151},
 PUBLISHER = {American Mathematical Society, Providence, R.I.},
      YEAR = {1974},
     PAGES = {vi+114},
   MRCLASS = {55A25},
  MRNUMBER = {0356023},
MRREVIEWER = {J. S. Birman},
}

@book{Munkres2000,
  title={Topology},
  author={Munkres, J.R.},
  isbn={9780131816299},
  lccn={99052942},
  series={Featured Titles for Topology},
  year={2000},
  publisher={Prentice Hall, Incorporated}
}

@article{Ito17,
  title = {Alexander polynomial obstruction of bi-orderability for rationally homologically fibered knot groups},
  author = {Tetsuya Ito},
  year = {2017},
  journal = {New York J. Math.},
  pages = {497--503},
  volume = {23},
}

@article{CDN16,
  title={Testing Bi-orderability of Knot Groups},
  volume={59},
  DOI={10.4153/CMB-2016-023-6},
  number={3},
  journal={Canadian Mathematical Bulletin},
  publisher={Cambridge University Press},
  author={Adam Clay and Colin Desmarais and Patrick Naylor},
  year={2016},
  pages={472–482}
}

@article{Yam17,
  author = {Takafumi Yamada},
  title = {A family of bi-orderable non-fibered 2-bridge knot groups},
  journal = {Journal of Knot Theory and Its Ramifications},
  volume = {26},
  number = {04},
  pages = {1750011},
  year = {2017},
  doi = {10.1142/S0218216517500110},
  URL = {https://doi.org/10.1142/S0218216517500110}
}

@article{Tych30,
  title={\"Uber die topologische Erweiterung von R\"aumen},
  volume={102},
  DOI={10.1007/BF01782364},
  number={1},
  journal={Mathematische Annalen},
  author={Andrey Tychonoff},
  year={1930},
  pages={544–561}
}

@article{PerRolf03,
  title={On orderability of fibred knot groups},
  volume={135},
  DOI={10.1017/S0305004103006674},
  number={1},
  journal={Mathematical Proceedings of the Cambridge Philosophical Society},
  publisher={Cambridge University Press},
  author={Bernard Perron and Dale Rolfsen},
  year={2003},
  pages={147–153}
}
\end{document}